%        1         2         3         4         5         6         7
%CCCCCCCCCCCCCCCCCCCCCCCCCCCCCCCCCCCCCCCvCCCCCCCCCCCCCCCCCCCCCCCCCCCCCCCCCC
\documentclass[UTF8]{amsart}
\usepackage{amssymb,amsmath,amsthm,mathrsfs,multirow,xcolor,framed,url}
\usepackage{float}
\usepackage{amsaddr}
\usepackage{lipsum}
\usepackage{tabularx}
\usepackage{longtable} % 长表格支持
\usepackage{threeparttablex} % 带注释的长表格支持
\usepackage{booktabs} % 美观的表格线条
\usepackage{array}
\usepackage[title]{appendix}
\usepackage{tikz}
\usepackage[all]{xy}
\oddsidemargin = 0.0cm
\evensidemargin = 0.0cm
\textwidth = 6.5in
\textheight =8.0in
\usepackage[ 
bookmarks=true,
bookmarksnumbered=true,
bookmarksopen=false,
hypertexnames=false,
breaklinks=true,
colorlinks=true,
pagebackref=true,
hyperindex=true,
plainpages=false
%---------------------
%Adapt these lines to the spectific package
pdfauthor={Zhou }
pdftitle={Ternary}
pdfsubject={ }
pdfkeywords={}
 ]
{hyperref}

%%%%%%%%%%%%%%%%%%%%%%%%%%%%%%%%%%%%%%%%%%%%%%%%%%%%%%%%%%%%%%%%%
\newtheorem{theorem}{Theorem}[section]
\newtheorem{lemma}[theorem]{Lemma}
\newtheorem{cor}[theorem]{Corollary}

\newtheorem{prop}[theorem]{Proposition}

\theoremstyle{definition}
\newtheorem{definition}[theorem]{Definition}

\usepackage{tikz}
\usetikzlibrary{arrows.meta}%画箭头，并非必需
\theoremstyle{remark}
\newtheorem{remark}{Remark}
\numberwithin{equation}{section}

%\usepackage{hyperref}
%%%%%%%%%%%%%%%%%%%%%%%%%%%%%%%%%%%%%%%%%%%%%%%%%%%%%%
\newcommand\nutwid{\overset {\text{\lower 3pt\hbox{$\sim$}}}\nu}

%%%%%%%%%%%%%%%%%%%%%%%%%%%%%%%%%%%%%%%%%%%%%%%%%%%%%%
%MY MACROS
%    Absolute value notation
\newcommand{\tr}{\operatorname{tr}}
\newcommand{\n}{\operatorname{n}}
\newcommand{\card}{\operatorname{card}}
\newcommand{\discrd}{\operatorname{discrd}}
\newcommand{\disc}{\operatorname{disc}}
\newcommand{\Aut}{\operatorname{Aut}}
\allowdisplaybreaks
%%BEGIN MACROS%%%%%%%%%%%%%%%%%%%%%%%%%%%%%%%%%%%%%%%%%%%%%%%%%%%%%%%%%%%%%%%
%%\newcommand\mylabel[1]{\quad\framebox{#1}\label{#1}}
%%\newcommand\mycite[1]{\quad\framebox{#1}}

%%\newcommand\mythm[1]{\ref{thm:#1}}

%%\newcommand\mylem[1]{\ref{lem:#1}}

%%\newcommand\myeqn[1]{(\ref{eq:#1})}

%%%%%%%%%%%%%%%%%%%%%%%%%%%%%%%%%%%%%%%%%%%%%%%%%%%%%%%%%%%%%%%%%%%%%%%%%%
%%BEGIN MACROS%%%%%%%%%%%%%%%%%%%%%%%%%%%%%%%%%%%%%%%%%%%%%%%%%%%%%%%%%%%%%%%

%%% special macs

\begin{document}

\title{Type number for orders of level $(N_1,N_2)$}

\author{Yifan Luo \textsuperscript{1} \quad
	Haigang Zhou\textsuperscript{2} }

\address{Institute of Mathematics, Henan Academy of Sciences, Zhengzhou, China, 450046 \textsuperscript{1}\\ 
School of Mathematical Sciences, Tongji University, Shanghai, China, 200092 \textsuperscript{2}}
\email{koromo233@gmail.com}

%\address{School of Mathematical Sciences, Tongji University, Shanghai, China, 200092 \textsuperscript{2}}
\email{haigangz@tongji.edu.cn}

\begin{abstract}
	We establish an explicit formula for the type number of quaternion orders 
	of level $(N_1, N_2)$, where $N_1 = p_1^{2u_1+1} \cdots p_w^{2u_w+1}$ 
	(with $u_i \geq 0$ and $w$ odd) and $\gcd(N_1, N_2) = 1$. 
	
	Our main result generalizes Pizer's work on Eichler orders (where $N_1$ 
	is squarefree) and Boyd's formula (where $N_1 = p^{2u+1}$) to the general 
	case with arbitrary prime powers in $N_1$. The proof introduces a 
	generalization of the modified Hurwitz class number $H^{(N_1,N_2)}(D)$, 
	originally defined by Li, Skoruppa and the second author for squarefree 
	levels. Through a bijection between quaternion orders and ternary 
	quadratic forms, we express the type number as a weighted sum of 
	representation numbers, which we evaluate explicitly via the Siegel-Weil 
	formula and local density computations.
	
	We compute type numbers for all levels with $N_1 N_2 \leq 100$ and 
	correct four entries in Boyd's 1994 table. As a further application, 
	we classify all 27 pairs $(N_1, N_2)$ having 
	type number $1$, extending the list of 9 squarefree pairs found by 
	Boylan, Skoruppa and the second author.
\end{abstract}

\keywords{ternary quadratic forms, quaternion algebras, local densities, Siegel–Weil formula, type number}
\subjclass[2020]{11E20, 11R52, 11F37, 11E41}
\maketitle

\section{Introduction}\label{Sec:1}

The type number of quaternion orders—the number of orders up to conjugacy in a definite quaternion algebra—is a fundamental invariant that connects arithmetic, geometry, and the theory of automorphic forms. Its systematic study began with Eichler~\cite{Eic56}, who established a formula for definite hereditary orders when $N_1N_2$ is squarefree, though his result was later corrected by Peters~\cite{Pet69}. 

When $N_1$ is squarefree, orders of level $(N_1,N_2)$ are Eichler orders, and the type number formula has been extensively studied. Pizer~\cite{Piz73} gave an explicit formula in this case, while Vignéras~\cite{Vig80} provided a more structural formulation expressed as an unevaluated sum, and Körner~\cite{Kor87} developed a general type number formula. Hasegawa and Hashimoto~\cite{HH95} reinterpreted the type number for Eichler orders in terms of dimensions of certain subspaces of cusp forms of weight $2$ using Atkin-Lehner involutions, a method later generalized by Hashimoto~\cite{Ha06}. 

For prime power levels $N_1 = p^{2u+1}$ with $p \neq 2$, Boyd~\cite{B94} derived the type number formula using the Selberg trace formula and optimal embedding theory. More recently, Boylan, Skoruppa, and the second author~\cite{BSZ19} gave a new proof for the type number of maximal orders via modified Hurwitz class numbers. This approach was extended by Li, Skoruppa, and the second author~\cite{LSZ21} to hereditary orders (where $N_1N_2$ is squarefree) using the modified Hurwitz class number $H^{(N_1,N_2)}(D)$.

\medskip

\noindent\textbf{Main result.} 
In this paper, we establish an explicit formula for the type number of orders of level $(N_1, N_2)$ (see Definition~\ref{deffororders}), where
\[
N_1 = p_1^{2u_1+1} \cdots p_w^{2u_w+1}
\]
with distinct primes $p_1,\ldots,p_w$, nonnegative integers $u_1,\ldots,u_w$, an odd positive integer $w$, and a positive integer $N_2$ such that $\gcd(N_1, N_2) = 1$. Let $N_0=p_1\ldots p_w$ be the product of all primes dividing $N_1$.  Our formula extends all previous results to the case of arbitrary prime divisors with odd power in $N_1$.

\begin{theorem}\label{the:type}
	Let $Q_{N_0}$ be a definite quaternion algebra over $\mathbb{Q}$ ramifying only at primes $p \mid N_0$. The class number of orders of level $(N_1,N_2)$ in $Q_{N_0}$ is 
	\begin{equation*}
		h_{N_1,N_2} = \frac{1}{2} H^{(N_1,N_2)}(4) + H^{(N_1,N_2)}(3) + H^{(N_1,N_2)}(0),
	\end{equation*}
	and the type number is 
	\begin{equation*}
		T_{N_1,N_2} = 2^{-e(N_1N_2)-1} \sum_{\substack{n \parallel N_1N_2}} \sum_{\substack{n \mid r \\ r^2 \leq 4n \\ (n,r) \neq (4,\pm 4)}} H^{(N_1,N_2)}(4n - r^2) \prod_{p \mid n} \frac{1 - \left(\frac{\Delta(-4n)}{p}\right)/p}{B_p(n) C_p(n)}.
	\end{equation*}
\end{theorem}

The notation in Theorem~\ref{the:type} is as follows:
\begin{itemize}%[left=0pt, itemsep=3pt]
	\item $e(N_1N_2)$ is the number of prime divisors of $N_1N_2$.    
	\item $n \parallel N_1N_2$ means $n \mid N_1N_2$ and $\gcd(n, N_1N_2/n) = 1$ (i.e., $n$ is a unitary divisor).
	\item $H^{(N_1,N_2)}(D)$ is the modified Hurwitz class number (Definition~\ref{def:H}).
	\item $\Delta(-4n)$ denotes the fundamental discriminant of $-4n$.
	\item $\left(\frac{\cdot}{p}\right)$ is the Kronecker symbol.
	\item $v_p(n)$ denotes the exponent of $p$ in the prime factorization of $n$.
	\item For a prime $p$ dividing $n$,
	\begin{equation}\label{eq:Bp}
		B_p(n) =
		\begin{cases}
			(p+1)p^{v_p(n)/2-1} & \text{if } v_p(n) \text{ is even}, \\
			p^{(v_p(n)-1)/2} & \text{if } v_p(n) \text{ is odd},
		\end{cases}
	\end{equation}
	and
	\begin{equation}\label{eq:Cp}
		C_p(n) =
		\begin{cases}
			2 & \text{if } p = 2, \; 4 \mid n, \text{ and } \Delta(-4n) \equiv 5 \pmod{8}, \\
			1 & \text{otherwise}.
		\end{cases}
	\end{equation}
\end{itemize}

\begin{remark}
	\textup{(i)} When $v_p(n) = 1$ for all $p \mid n$, one verifies that 
	\[
	1 - \left(\frac{\Delta(-4n)}{p}\right)/p = B_p(n) = C_p(n) = 1,
	\]
	so the product in Theorem~\ref{the:type} equals $1$. Thus, when $N_1$ and $N_2$ are squarefree (i.e., all orders are hereditary), our formula reduces to that of Li, Skoruppa, and the second author~\cite{LSZ21}.
	
	\noindent\textup{(ii)} When $N_1$ is squarefree and $N_2$ is arbitrary, our result recovers Pizer's formula~\cite{Piz73} for Eichler orders.
	
	\noindent\textup{(iii)} When $N_1 = p^{2u+1}$ for an odd prime $p$, our formula specializes to Boyd's result~\cite{B94}.
	
	\noindent\textup{(iv)} For $n \geq 4$ and $n\mid r$  with $(n,r) \neq (4, \pm 4)$, the condition $r^2 \leq 4n$ forces $r = 0$, simplifying the inner sum.
\end{remark}

\medskip

\noindent\textbf{Strategy of proof.}
The key to our approach is a natural generalization of the modified Hurwitz class number $H^{(N_1,N_2)}(D)$, originally defined in~\cite{LSZ21} only for squarefree levels. We establish a bijection between orders of level $(N_1, N_2)$ and ternary quadratic forms in a suitable genus, which reduces the counting problem to computing representation numbers of forms. The Siegel-Weil formula then expresses these representation numbers in terms of $H^{(N_1,N_2)}(D)$ and local correction factors, which we evaluate explicitly via local density computations at each ramified prime.

\medskip

\noindent\textbf{Classification of type number $1$.}
As an application, we extend the classification of orders with type number $1$. When $N_1$ is squarefree and $N_2 = 1$, Boylan, Skoruppa, and the second author~\cite{BSZ19} found exactly $9$ such levels:
\begin{equation*}
	\{(2,1), (3,1), (5,1), (7,1), (13,1), (30,1), (42,1), (70,1), (78,1)\}.
\end{equation*}
For hereditary orders (squarefree $N_1 N_2$), Li, Skoruppa, and the second author~\cite{LSZ21} found an additional $11$ pairs:
\begin{equation*}
	\{(2, 3), (2, 5), (2, 7), (2, 11), (2, 15), (2, 23), (3, 2), (3, 5), (3, 11), (5, 2), (7, 3)\}.
\end{equation*}
In this paper, using Theorem~\ref{the:type}, we find $7$ new pairs with non-squarefree levels:
\begin{equation*}
	\{(2, 9), (3, 4), (3, 8), (5, 4), (8, 1), (8, 5), (168, 1)\}.
\end{equation*}
This gives a complete list of $\mathbf{27}$ exceptional levels $(N_1, N_2)$ with type number $1$.

\begin{remark}
	The case $(168, 1)$ is particularly interesting: $168 = 2^3 \cdot 3 \cdot 7$, and the order corresponds to the unique order of level $(168,1)$ in the quaternion algebra ramified at $\{2, 3, 7, \infty\}$. This is the largest level among non-hereditary orders with type number $1$ in our range.
\end{remark}

\medskip

\noindent\textbf{Computational results.}
We tabulate class numbers $h_{N_1,N_2}$ and type numbers $T_{N_1,N_2}$ for all levels with $N_1 N_2 \leq 100$ in Table~\ref{tab:typenumber}. In the process, we discovered minor numerical discrepancies in Boyd's pioneering work~\cite{B94}. While Boyd's theoretical framework is entirely correct, four entries in his illustrative table~\cite[p.~152]{B94} require correction:
\[
T_{3^3, 5}, \quad T_{5^3, 8}, \quad T_{3^7, 1}, \quad T_{13^3, 2^4}.
\]
The corrected values, computed using our formula in SageMath, are given in Table~\ref{tab:Boyd}. These corrections do not affect Boyd's theoretical contributions but ensure the numerical examples align with the formula's predictions.

\medskip

\noindent\textbf{Organization of the paper.}
The remainder of this paper is organized as follows. In Section~\ref{Sec:2} and Section~\ref{Sec:3}, we recall the definition of orders of level $(N_1, N_2)$ and establish the bijection to ternary quadratic forms. Section~\ref{Sec:4} develops the generalized modified Hurwitz class number $H^{(N_1,N_2)}(D)$ and derives its explicit formula via the Siegel-Weil formula. In Section~\ref{Sec:5}, we prove Theorem~\ref{the:type} by combining the bijection with local density calculations. The computational results are deferred to Appendix~\ref{App:A}, and technical details on local densities appear in Appendix~\ref{App:B}.

\section{Quaternion algebras and orders}\label{Sec:2}

In this section, we establish the foundational framework for quaternion algebras and orders of level $(N_1, N_2)$. Our presentation follows the standard references~\cite{Voi21, Piz76, Piz77}, with particular attention to the structure of normalizers and two-sided ideals, which play a crucial role in the computation of type numbers.

\subsection{Quaternion algebras over local and global fields}

\subsubsection{Basic definitions}

Let $F$ be a field of characteristic $0$ and $a, b \in F^\times$. The \emph{quaternion algebra} $Q = \left(\frac{a,b}{F}\right)$ is the $F$-algebra with basis $\{1, i, j, k\}$ subject to the relations
\[
i^2 = a, \quad j^2 = b, \quad k = ij = -ji.
\]
Throughout this paper, $F$ denotes either the rational numbers $\mathbb{Q}$, the real numbers $\mathbb{Q}_\infty = \mathbb{R}$, or the $p$-adic numbers $\mathbb{Q}_p$ for a prime $p$.

Over a local field $\mathbb{Q}_p$ (or $\mathbb{Q}_\infty$), there are exactly two isomorphism classes of quaternion algebras: the matrix algebra $M_2(\mathbb{Q}_p)$ and the unique division algebra (a skew-field). The standard $p$-adic Hilbert symbol $(a, b)_p$ takes values in $\{\pm 1\}$ and determines the isomorphism class:
\[
Q_p = \left(\frac{a,b}{\mathbb{Q}_p}\right) \simeq M_2(\mathbb{Q}_p) \quad \Longleftrightarrow \quad (a, b)_p = 1.
\]
At the archimedean place, $(a, b)_\infty = -1$ if and only if $a, b < 0$.

\subsubsection{Global quaternion algebras}

For quaternion algebras over $\mathbb{Q}$, two algebras $Q = \left(\frac{a,b}{\mathbb{Q}}\right)$ and $Q' = \left(\frac{a',b'}{\mathbb{Q}}\right)$ are isomorphic if and only if
\[
(a, b)_p = (a', b')_p \quad \text{for all primes } p \text{ and } \infty.
\]
A quaternion algebra $Q$ over $\mathbb{Q}$ is said to \emph{ramify} at a place $v$ (finite prime $p$ or $\infty$) if $\mathbb{Q}_v \otimes_\mathbb{Q} Q$ is a division algebra, and to \emph{split} at $v$ if $\mathbb{Q}_v \otimes_\mathbb{Q} Q \simeq M_2(\mathbb{Q}_v)$. A quaternion algebra ramifies at finitely many places, and by the Hasse-Minkowski theorem, the number of ramified places is always even. A quaternion algebra is called \emph{definite} if it ramifies at the infinite place $\infty$.

Given a finite set $S$ of an even number of primes, there exist $a, b \in \mathbb{Q}^\times$ such that $(a, b)_p = -1$ if and only if $p \in S$. The resulting algebra $Q = \left(\frac{a,b}{\mathbb{Q}}\right)$ is (up to isomorphism) the unique quaternion algebra ramifying exactly at the primes in $S$. The \emph{reduced discriminant} $\disc(Q)$ is defined as the product of all finite primes at which $Q$ ramifies.

\subsubsection{Reduced trace and norm}

For any element $\alpha = t + xi + yj + zk \in Q$, the \emph{conjugation} is defined by
\[
\overline{\alpha} = t - xi - yj - zk.
\]
This involution satisfies $\overline{\overline{\alpha}} = \alpha$ and $\overline{\alpha\beta} = \overline{\beta} \, \overline{\alpha}$ for all $\alpha, \beta \in Q$. The \emph{reduced trace} and \emph{reduced norm} are given by
\[
\tr(\alpha) = \alpha + \overline{\alpha}, \quad \n(\alpha) = \alpha \cdot \overline{\alpha}.
\]
When $Q \simeq M_2(F)$ and 
\[
A = \begin{pmatrix} a & b \\ c & d \end{pmatrix} \in Q,
\]
we have
\[
\overline{A} = \begin{pmatrix} d & -b \\ -c & a \end{pmatrix}, \quad \tr(A) = a + d, \quad \n(A) = ad - bc = \det(A).
\]
Every element $\alpha \in Q$ satisfies its \emph{reduced characteristic polynomial}
\[
x^2 - \tr(\alpha) x + \n(\alpha) = 0.
\]

\subsection{Orders and ideals}

\subsubsection{Lattices and orders}

An \emph{ideal} in a quaternion algebra $Q$ over $\mathbb{Q}$ is a full $\mathbb{Z}$-lattice, i.e., a finitely generated $\mathbb{Z}$-submodule containing a $\mathbb{Q}$-basis of $Q$. An \emph{order} $\mathcal{O}$ in $Q$ is an ideal that is also a ring with $\mathbb{Z} \subseteq \mathcal{O}$. For the remainder of this paper, $\mathcal{O}$ always denotes an order in a definite quaternion algebra.

If $x \in \mathcal{O}$, then $\tr(x), \n(x) \in \mathbb{Z}$. An order $\mathcal{O}$ is \emph{maximal} if it is not properly contained in any other order. Over a local field $\mathbb{Q}_p$, a quaternion division algebra contains a unique maximal order (up to conjugation).

\subsubsection{Local structure of maximal orders}

For an odd prime $p$, let $\epsilon \in \mathbb{Z}_p^\times$ with $\left(\frac{\epsilon}{p}\right) = -1$, and set $i^2 = \epsilon$, $j^2 = p$. The unique maximal order in the quaternion division algebra $Q_p$ over $\mathbb{Q}_p$ is
\[
\mathcal{O}_p = R_p + R_p j, \quad \text{where } R_p = \mathbb{Z}_p + i\mathbb{Z}_p.
\]
For $p = 2$, we take $R_2 = \mathbb{Z}_2 + \frac{i+1}{2}\mathbb{Z}_2$. In matrix form,
\[
Q_p \simeq \left\{ \begin{pmatrix} \alpha & \beta \\ p\overline{\beta} & \overline{\alpha} \end{pmatrix} : \alpha, \beta \in \mathbb{Q}_p + i\mathbb{Q}_p \right\},
\]
and the maximal order is
\[
\mathcal{O}_p \simeq \left\{ \begin{pmatrix} \alpha & \beta \\ p\overline{\beta} & \overline{\alpha} \end{pmatrix} : \alpha, \beta \in R_p \right\}.
\]
For a nonnegative integer $u$, define
\[
\mathcal{O}_p^{(2u+1)} = \left\{ \begin{pmatrix} \alpha & p^u \beta \\ p^{u+1} \overline{\beta} & \overline{\alpha} \end{pmatrix} : \alpha, \beta \in R_p \right\}.
\]
One verifies that $\mathcal{O}_p^{(2u+1)}$ is an order in $Q_p$ with index $[\mathcal{O}_p : \mathcal{O}_p^{(2u+1)}] = p^{2u}$.

\subsection{Orders of level \texorpdfstring{$(N_1, N_2)$}{(N1, N2)}}

Throughout this paper, we assume that $Q$ is a \emph{definite} quaternion algebra over $\mathbb{Q}$. By the ramification condition, $Q$ must ramify at an odd number of finite primes (and at the infinite place).

\begin{definition}[\cite{Piz76}]\label{deffororders}
	Let $N_1 = p_1^{2u_1+1} \cdots p_w^{2u_w+1}$, where $p_1, \ldots, p_w$ are distinct primes, $u_1, \ldots, u_w$ are nonnegative integers, and $w$ is odd. Let $N_2$ be a positive integer with $\gcd(N_1, N_2) = 1$. An order $\mathcal{O}$ in a definite quaternion algebra $Q_{N_0}$ ramifying only at $\{p_1, \ldots, p_w, \infty\}$ is said to be \emph{of level $(N_1, N_2)$} if its local completions satisfy:
	\begin{enumerate}%[label=\upshape(\arabic*), leftmargin=2em]
		\item For $1 \leq i \leq w$, $\mathcal{O}_{p_i}$ is isomorphic (over $\mathbb{Z}_{p_i}$) to $\mathcal{O}_{p_i}^{(2u_i+1)}$.
		\item For primes $p \nmid N_1$, $\mathcal{O}_p$ is isomorphic (over $\mathbb{Z}_p$) to 
		\[
		\begin{pmatrix} \mathbb{Z}_p & \mathbb{Z}_p \\ N_2 \mathbb{Z}_p & \mathbb{Z}_p \end{pmatrix}.
		\]
	\end{enumerate}
\end{definition}

\begin{remark}
	\textup{(i)} If $N_1$ is squarefree and $N_2 = 1$, orders of level $(N_1, 1)$ are maximal orders.
	
	\noindent\textup{(ii)} If $N_1 N_2$ is squarefree, orders of level $(N_1, N_2)$ are hereditary orders (also called Eichler orders of squarefree level).
	
	\noindent\textup{(iii)} If $N_1$ is squarefree (but $N_2$ is arbitrary), orders of level $(N_1, N_2)$ are Eichler orders.
\end{remark}

\subsection{Left and right ideals}

\begin{definition}
	Let $\mathcal{O}$ be an order of level $(N_1, N_2)$. A \emph{left $\mathcal{O}$-ideal} is an ideal $I$ in $Q$ such that $I_p = \mathcal{O}_p x_p$ for every prime $p$, where $x_p \in (\mathbb{Q}_p \otimes_\mathbb{Q} Q)^\times$. The notion of a \emph{right $\mathcal{O}$-ideal} is defined analogously.
\end{definition}

Two orders $\mathcal{O}$ and $\mathcal{O}'$ are said to be \emph{of the same type} (or \emph{isomorphic as $\mathbb{Z}$-algebras}) if there exists $\alpha \in Q^\times$ such that $\mathcal{O}' = \alpha^{-1} \mathcal{O} \alpha$. Orders $\mathcal{O}$ and $\mathcal{O}'$ are \emph{locally isomorphic} (or in the same \emph{genus}) if $\mathcal{O}_p$ and $\mathcal{O}'_p$ are conjugate in $Q_p$ for all primes $p$.

The \emph{type set} of $\mathcal{O}$ is the set of isomorphism classes of orders in the genus of $\mathcal{O}$, and its cardinality is the \emph{type number} $T_{N_1, N_2}$. The \emph{class number} $h_{N_1, N_2}$ is the number of equivalence classes of left $\mathcal{O}$-ideals modulo multiplication by $Q^\times$ on the right. Both $T_{N_1, N_2}$ and $h_{N_1, N_2}$ are finite and depend only on $(N_1, N_2)$.

\subsection{Two-sided ideals and normalizers}

To compute type and class numbers, we study two-sided ideals and their relation to normalizers.

\begin{definition}
	An ideal $I$ is a \emph{two-sided $\mathcal{O}$-ideal} if it is both a left and a right $\mathcal{O}$-ideal.
\end{definition}

An ideal $I$ is two-sided if and only if there exist $x_p, y_p \in (\mathbb{Q}_p \otimes_\mathbb{Q} Q)^\times$ such that $I_p = \mathcal{O}_p x_p = y_p \mathcal{O}_p$ for every prime $p$. This is equivalent to $y_p = u_p x_p$ for some $u_p \in \mathcal{O}_p^\times$, which yields $\mathcal{O}_p = x_p^{-1} \mathcal{O}_p x_p$. This motivates the definition of the \emph{normalizer} of $\mathcal{O}_p$ in $Q_p$:
\[
N(\mathcal{O}_p) = \{ x_p \in (\mathbb{Q}_p \otimes_\mathbb{Q} Q)^\times : x_p^{-1} \mathcal{O}_p x_p = \mathcal{O}_p \}.
\]

\begin{prop}\label{propfortwosidedideal}
	An ideal $I$ is a two-sided $\mathcal{O}$-ideal if and only if there exists $x_p \in N(\mathcal{O}_p)$ such that $I_p = \mathcal{O}_p x_p$ for every prime $p$.
\end{prop}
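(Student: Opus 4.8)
The statement is local in nature: a full lattice $I\subset Q$ is an ideal precisely when $I_p=\mathcal{O}_p$ for all but finitely many primes $p$, and being a left, right, or two-sided $\mathcal{O}$-ideal can be tested one place at a time. The plan is therefore to prove the equivalence for each completion $\mathbb{Q}_p\otimes_\mathbb{Q}Q$ separately and then reassemble; the condition ``$x_p\in N(\mathcal{O}_p)$'' is harmless at the cofinite set of primes where $I_p=\mathcal{O}_p$, since there one may take $x_p=1\in N(\mathcal{O}_p)$. So it suffices to show, locally, that $I_p$ is a two-sided $\mathcal{O}_p$-ideal if and only if $I_p=\mathcal{O}_px_p$ for some $x_p\in N(\mathcal{O}_p)$.

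For the forward implication, suppose $I$ is two-sided. As recorded in the paragraph preceding the statement, for each $p$ one can write $I_p=\mathcal{O}_px_p=y_p\mathcal{O}_p$ with $x_p,y_p\in(\mathbb{Q}_p\otimes_\mathbb{Q}Q)^\times$; set $u_p=y_px_p^{-1}$. First I would check $u_p\in\mathcal{O}_p$: since $y_p\in I_p=\mathcal{O}_px_p$, there is $o_p\in\mathcal{O}_p$ with $y_p=o_px_p$, hence $u_p=o_p\in\mathcal{O}_p$. Next, taking reduced norms of lattices in the equality $\mathcal{O}_px_p=y_p\mathcal{O}_p=u_px_p\mathcal{O}_p$ and using $\n(\mathcal{O}_p)=\mathbb{Z}_p$ (and multiplicativity of the reduced norm on fractional lattices over $\mathbb{Z}_p$) gives $\n(x_p)\mathbb{Z}_p=\n(u_p)\n(x_p)\mathbb{Z}_p$, so $\n(u_p)\in\mathbb{Z}_p^\times$. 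Since $\bar u_p=\tr(u_p)-u_p\in\mathcal{O}_p$ and $u_p\bar u_p=\n(u_p)$, we get $u_p^{-1}=\bar u_p\,\n(u_p)^{-1}\in\mathcal{O}_p$, i.e. $u_p\in\mathcal{O}_p^\times$. Multiplying $\mathcal{O}_px_p=u_px_p\mathcal{O}_p$ on the left by $u_p^{-1}$ then yields $\mathcal{O}_px_p=x_p\mathcal{O}_p$, that is $x_p^{-1}\mathcal{O}_px_p=\mathcal{O}_p$, so $x_p\in N(\mathcal{O}_p)$.

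For the converse, suppose $x_p\in N(\mathcal{O}_p)$ and $I_p=\mathcal{O}_px_p$ for every prime $p$. Then $I$ is a left $\mathcal{O}$-ideal by definition, and moreover $I_p=\mathcal{O}_px_p=\bigl(x_p\mathcal{O}_px_p^{-1}\bigr)x_p=x_p\mathcal{O}_p$, so $I$ is also a right $\mathcal{O}$-ideal; hence $I$ is two-sided.

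The only genuinely substantive point is the norm computation giving $\n(u_p)\in\mathbb{Z}_p^\times$, which relies on the standard facts that the reduced norm of a fractional $\mathcal{O}_p$-lattice is multiplicative and that $\n(\mathcal{O}_p)=\mathbb{Z}_p$; everything else is bookkeeping with local ideals and the local-global dictionary for lattices in $Q$.
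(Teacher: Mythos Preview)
Your proof is correct and follows essentially the same approach as the paper: the paper's argument (given in the paragraph immediately preceding the proposition) also writes $I_p=\mathcal{O}_px_p=y_p\mathcal{O}_p$, sets $y_p=u_px_p$ with $u_p\in\mathcal{O}_p$, uses the reduced-norm comparison $\n(u_p)=\n(y_p)/\n(x_p)\in\mathbb{Z}_p^\times$ to conclude $u_p\in\mathcal{O}_p^\times$, and hence $x_p^{-1}\mathcal{O}_px_p=\mathcal{O}_p$. Your write-up is somewhat more detailed (explicitly verifying $u_p\in\mathcal{O}_p$ and $u_p^{-1}\in\mathcal{O}_p$ via the conjugate), but the underlying idea is the same.
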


The structure of $N(\mathcal{O}_p)$ is given by the following result.

\begin{lemma}[\cite{Piz77}]\label{lemmafornormalizer}
	Let $\mathcal{O}$ be an order of level $(N_1, N_2)$. Then:
	\begin{itemize}%[left=0pt, itemsep=3pt]
		\item If $p \nmid N_1 N_2$, then $N(\mathcal{O}_p) = \mathcal{O}_p^\times \mathbb{Q}_p^\times$.
		\item If $p^{2u+1} \parallel N_1$, then 
		\[
		N(\mathcal{O}_p) = \mathcal{O}_p^\times \mathbb{Q}_p^\times \cup \begin{pmatrix} 0 & p^u \\ p^{u+1} & 0 \end{pmatrix} \mathcal{O}_p^\times \mathbb{Q}_p^\times.
		\]
		\item If $p^v \parallel N_2$, then 
		\[
		N(\mathcal{O}_p) = \mathcal{O}_p^\times \mathbb{Q}_p^\times \cup \begin{pmatrix} 0 & 1 \\ p^v & 0 \end{pmatrix} \mathcal{O}_p^\times \mathbb{Q}_p^\times.
		\]
	\end{itemize}
\end{lemma}

\subsection{Two-sided principal ideals}

We now characterize elements $x \in \mathcal{O}$ such that $\mathcal{O} x$ is a two-sided principal ideal.

\begin{lemma}\label{lemmaforprincipalideal}
	Let $\mathcal{O}$ be an order of level $(N_1, N_2)$ and $x \in \mathcal{O}$. Then $\mathcal{O} x$ is a two-sided principal $\mathcal{O}$-ideal if and only if the following conditions hold:
	\begin{enumerate}%[label=\upshape(\arabic*), leftmargin=2em]
		\item $\n(x) \parallel N_1 N_2$ (i.e., $\n(x) \mid N_1 N_2$ and $\gcd(\n(x), N_1 N_2 / \n(x)) = 1$).
		\item $\n(x) \mid \tr(x)$.
		\item For each prime $p$ with $p^l \parallel N_1 N_2$ and $p^l \parallel \n(x)$, we have $p^l \mid c_{11}(x_p)$, where $c_{11}(x_p)$ denotes the $(1,1)$-entry of the matrix representation of $x_p \in \mathcal{O}_p$.
	\end{enumerate}
\end{lemma}

\begin{proof}
	\textbf{Necessity.} Suppose $\mathcal{O} x$ is a two-sided principal $\mathcal{O}$-ideal. By Proposition~\ref{propfortwosidedideal}, $x \in N(\mathcal{O}_p)$ for all primes $p$. By Lemma~\ref{lemmafornormalizer}, for each prime $p$:
	\begin{itemize}%[left=0pt]
		\item If $p \nmid N_1 N_2$, then $x_p \in \mathcal{O}_p^\times$, so $\n(x) \in \mathbb{Z}_p^\times$.
		\item If $p^{2u+1} \parallel N_1$, then $x_p \in \mathcal{O}_p^\times \cup \begin{pmatrix} 0 & p^u \\ p^{u+1} & 0 \end{pmatrix} \mathcal{O}_p^\times$, so $\n(x) \in \mathbb{Z}_p^\times \cup p^{2u+1} \mathbb{Z}_p^\times$.
		\item If $p^v \parallel N_2$, then $x_p \in \mathcal{O}_p^\times \cup \begin{pmatrix} 0 & 1 \\ p^v & 0 \end{pmatrix} \mathcal{O}_p^\times$, so $\n(x) \in \mathbb{Z}_p^\times \cup p^v \mathbb{Z}_p^\times$.
	\end{itemize}
	This implies $\n(x) \parallel N_1 N_2$, verifying condition~(1).
	
	We now verify conditions~(2) and~(3) in each case:
	
	\textit{Case 1:} $x_p \in \mathcal{O}_p^\times$. Then $\n(x_p) \in \mathbb{Z}_p^\times$, so $\n(x) \mid \tr(x)$ in $\mathbb{Z}_p$.
	
	\textit{Case 2:} $x_p \in \begin{pmatrix} 0 & 1 \\ p^v & 0 \end{pmatrix} \mathcal{O}_p^\times$ for $p^v \parallel N_2$. Write
	\[
	x_p = \begin{pmatrix} 0 & 1 \\ p^v & 0 \end{pmatrix} \begin{pmatrix} a & b \\ p^v c & d \end{pmatrix} = \begin{pmatrix} p^v c & d \\ p^v a & p^v b \end{pmatrix}
	\]
	for some $a, d \in \mathbb{Z}_p$ and $b,c \in \mathbb{Z}_p$ with $ad - p^v bc \in \mathbb{Z}_p^\times$. Then 
	\[
	\tr(x_p) = p^v(c + b) \in p^v \mathbb{Z}_p,
	\]
	so $\n(x) \mid \tr(x)$ and $p^v \mid c_{11}(x_p) = p^v c$.
	
	\textit{Case 3:} $x_p \in \begin{pmatrix} 0 & p^u \\ p^{u+1} & 0 \end{pmatrix} \mathcal{O}_p^\times$ for $p^{2u+1} \parallel N_1$. Write
	\[
	x_p = \begin{pmatrix} 0 & p^u \\ p^{u+1} & 0 \end{pmatrix} \begin{pmatrix} \alpha & p^u \beta \\ p^{u+1} \overline{\beta} & \overline{\alpha} \end{pmatrix} = \begin{pmatrix} p^{2u+1} \overline{\beta} & p^u \overline{\alpha} \\ p^{u+1} \alpha & p^{2u+1} \beta \end{pmatrix}
	\]
	for $\alpha, \beta \in R_p$. Then 
	\[
	\tr(x_p) = p^{2u+1}(\overline{\beta} + \beta) = p^{2u+1} \tr(\beta) \in p^{2u+1} \mathbb{Z}_p,
	\]
	and $c_{11}(x_p) = p^{2u+1} \overline{\beta}$, so $p^{2u+1} \mid c_{11}(x_p)$.
	
	\medskip
	
	\noindent\textbf{Sufficiency.} Conversely, suppose $x \in \mathcal{O}$ satisfies conditions~(1)--(3). Since $\mathcal{O} x$ is automatically a left $\mathcal{O}$-ideal, it suffices to show $x \in N(\mathcal{O}_p)$ for all primes $p$.
	
	If $p \nmid \n(x)$, then $\n(x_p) \in \mathbb{Z}_p^\times$, so $x_p \in \mathcal{O}_p^\times \subseteq N(\mathcal{O}_p)$.
	
	If $p \mid \n(x)$, then by condition~(1), $p \mid N_1 N_2$. We consider two subcases:
	
	\textit{case (a):} $p^{2u+1} \parallel N_1$ and $p^{2u+1} \parallel \n(x)$. Write $x_p = \begin{pmatrix} \alpha & p^u \beta \\ p^{u+1} \overline{\beta} & \overline{\alpha} \end{pmatrix}$. By condition~(3), $p^{2u+1} \mid \alpha$, so
	\[
	\begin{pmatrix} 0 & p^u \\ p^{u+1} & 0 \end{pmatrix}^{-1} x_p = \begin{pmatrix} \overline{\beta} & p^{-u-1} \overline{\alpha} \\ p^{-u} \alpha & \beta \end{pmatrix} \in \mathcal{O}_p^\times.
	\]
	Hence $x_p \in N(\mathcal{O}_p)$.
	
	\textit{case (b):} $p^v \parallel N_2$ and $p^v \mid \n(x)$. Write $x_p = \begin{pmatrix} a & b \\ p^v c & d \end{pmatrix}$. By conditions~(2) and~(3), $p^v \mid a$ and $\tr(x) = a + d \in p^v \mathbb{Z}_p$ imply $p^v \mid d$. Thus
	\[
	\begin{pmatrix} 0 & 1 \\ p^v & 0 \end{pmatrix}^{-1} x_p = \begin{pmatrix} c & p^{-v} d \\ a & b \end{pmatrix} \in \mathcal{O}_p^\times,
	\]
	so $x_p \in N(\mathcal{O}_p)$.
\end{proof}

\begin{remark}\label{remarkfortwosidedprincipal}
	When $u = 0$ (for primes dividing $N_1$) or $v = 1$ (for primes dividing $N_2$), condition~(3) is automatically satisfied whenever conditions~(1) and~(2) hold. Specifically:
	\begin{itemize}%[left=0pt]
		\item If $p \mid \n(\alpha)$ and $p \mid \tr(\alpha)$ for $\alpha \in R_p$, then $p \mid \alpha$.
		\item If $p \mid ad$ and $p \mid (a + d)$ for $a, d \in \mathbb{Z}_p$, then $p \mid a$.
	\end{itemize}
	Hence, in the hereditary case ($N_1 N_2$ squarefree), condition~(3) can be omitted.
\end{remark}

\subsection{Counting two-sided principal ideals}

Fix an order $\mathcal{O}$ of level $(N_1, N_2)$. Let $\mathfrak{I}(\mathcal{O})$ denote the group of two-sided $\mathcal{O}$-ideals, and $\mathfrak{B}(\mathcal{O})$ the subgroup of two-sided principal $\mathcal{O}$-ideals. The quotient $\mathfrak{I}(\mathcal{O}) / \mathbb{Q}^\times$ has order $2^{e(N_1 N_2)}$, where $e(N_1 N_2)$ denotes the number of prime divisors of $N_1 N_2$.

\begin{theorem}\label{theoremforxingshu}
	Let $\mathcal{O}$ be an order of level $(N_1, N_2)$, and let $m(\mathcal{O})$ denote the number of left $\mathcal{O}$-ideal classes containing a two-sided $\mathcal{O}$-ideal. Then:
	\begin{equation}\label{XINGSHU0}
		\card(\mathfrak{B}(\mathcal{O}) / \mathbb{Q}^\times) = \frac{2^{e(N_1 N_2)}}{m(\mathcal{O})},
	\end{equation}
	\begin{equation}\label{XINGSHU1}
		\card(\mathfrak{B}(\mathcal{O}) / \mathbb{Q}^\times) = \sum_{n \parallel N_1 N_2} \sum_{\substack{n \mid r \\ r^2 \leq 4n}}' \frac{\rho_\mathcal{O}(n, r)}{\card(\mathcal{O}^\times)},
	\end{equation}
	and
	\begin{equation}\label{XINGSHU2}
		\card(\Aut(\mathcal{O})) = \frac{2^{e(N_1 N_2)} \card(\mathcal{O}^\times)}{2 m(\mathcal{O})}.
	\end{equation}
	Here $\rho_\mathcal{O}(n, r)$ denotes the number of roots of $x^2 - rx + n = 0$ in $\mathcal{O}$, and $\sum'$ restricts to elements $x \in \mathcal{O}$ satisfying condition~(3) of Lemma~\ref{lemmaforprincipalideal}.
\end{theorem}

\begin{proof}
	Equation~\eqref{XINGSHU0} is~\cite[Proposition 2.14]{LSZ21}. Equations~\eqref{XINGSHU1} and~\eqref{XINGSHU2} follow by adapting the arguments in~\cite{LSZ21} to the present setting, using Lemma~\ref{lemmaforprincipalideal} to enumerate two-sided principal ideals.
\end{proof}

The following classical result relates the class number to the unit groups of orders.

\begin{prop}[\cite{Piz76}]\label{mass}
	For orders of level $(N_1, N_2)$, the mass formula holds:
	\begin{equation*}
		\sum_{i=1}^{h_{N_1,N_2}} \frac{1}{\card(\mathcal{O}_i^\times)} = \frac{N_1 N_2}{24} \prod_{p \mid N_1} \left(1 - \frac{1}{p}\right) \prod_{p \mid N_2} \left(1 + \frac{1}{p}\right).
	\end{equation*}
\end{prop}

Combining Theorem~\ref{theoremforxingshu} with the mass formula, we obtain:
\begin{equation}\label{eq:masstype}
	\sum_{\mu=1}^{T_{N_1,N_2}} \frac{1}{\card(\Aut(\mathcal{O}_\mu))} = 2^{-e(N_1 N_2)} \frac{N_1 N_2}{12} \prod_{p \mid N_1} \left(1 - \frac{1}{p}\right) \prod_{p \mid N_2} \left(1 + \frac{1}{p}\right).
\end{equation}

By equations~\eqref{XINGSHU0},~\eqref{XINGSHU1}, and~\eqref{XINGSHU2}, we derive the key formula:
\begin{align}\label{eq:typenumber_intermediate}
	T_{N_1,N_2} 
	&= 2^{-e(N_1 N_2)} \sum_{\mu=1}^{T_{N_1,N_2}} 2^{e(N_1 N_2)} \notag \\
	&= 2^{-e(N_1 N_2)} \sum_{\mu=1}^{T_{N_1,N_2}} m(\mathcal{O}_\mu) \card(\mathfrak{B}(\mathcal{O}_\mu) / \mathbb{Q}^\times) \notag \\
	&= 2^{-e(N_1 N_2)} \sum_{\mu=1}^{T_{N_1,N_2}} m(\mathcal{O}_\mu) \sum_{n \parallel N_1 N_2} \sum_{\substack{n \mid r \\ r^2 \leq 4n}}' \frac{\rho_{\mathcal{O}_\mu}(n, r)}{\card(\mathcal{O}_\mu^\times)} \notag \\
	&= 2^{-1} \sum_{n \parallel N_1 N_2} \sum_{\substack{n \mid r \\ r^2 \leq 4n}}' \sum_{\mu=1}^{T_{N_1,N_2}} \frac{\rho_{\mathcal{O}_\mu}(n, r)}{\card(\Aut(\mathcal{O}_\mu))}.
\end{align}

The key step in proving Theorem~\ref{the:type} is to evaluate the weighted sum
\[
\sum_{\mu=1}^{T_{N_1,N_2}} \frac{\rho_{\mathcal{O}_\mu}(n, r)}{\card(\Aut(\mathcal{O}_\mu))}
\]
in terms of the modified Hurwitz class number $H^{(N_1, N_2)}(4n - r^2)$. This is accomplished in Section~\ref{Sec:3} via a bijection between orders and ternary quadratic forms, combined with the Siegel-Weil formula.

\section{Ternary quadratic forms and quaternion orders}\label{Sec:3}
In this section, let $N_1=p_1^{2u_1+1}\cdots p_w^{2u_w+1}$, $N_0=p_1\cdots p_w$, where $p_1,\ldots,p_w$ are distinct primes, $u_1,\ldots,u_w$ are nonnegative integers and $w$ is an odd positive integer, and $N_2$ be a positive integer such that $\gcd(N_1,N_2)=1$. 
Bijections between quaternion orders and ternary quadratic forms allow us to express the counting problems $\rho_\mathcal{O}(n,r)$ and related weighted sums in terms of representation numbers $R_f(D)$, which are computable via the Siegel-Weil formula.

\subsection{\texorpdfstring{Bijections between orders of level $(N_1,N_2)$ and ternary quadratic forms}{Bijections between orders of level (N1,N2) and ternary quadratic forms}}
In this subsection, we will introduce bijections between orders of level $(N_1,N_2)$ and ternary quadratic forms for given $N_1$ and $N_2$. For more detail and notations about ternary quadratic forms (level $N_f$, discriminant $d_f$, correspondence $\phi_p$, equivalent class and genus $G$), we refer to~\cite[Section 2]{Leh92} and~\cite[Section 2]{LZ24}.

To construct the bijection, we first need to normalize the basis of each order. 
The following proposition guarantees the existence of a basis with prescribed trace 
conditions, which will serve as the foundation for associating quadratic forms to orders.
This proposition is a straightforward generalization of~\cite[Proposition 4.1]{LZ24}, and its proof follows similarly.
\begin{prop}\label{propforbase}
Let 
\begin{center}
$\mathcal{O}=\mathbb{Z}+\mathbb{Z}\alpha_1+\mathbb{Z}\alpha_2+\mathbb{Z}\alpha_3\subseteq Q$
\end{center}
be an order of level $(N_1,N_2)$, then there exist $\tr(\alpha_1')=\tr(\alpha_2')=0$, and $\tr(\alpha_3')=1$, such that
\begin{center}
$\mathcal{O}=\mathbb{Z}+\mathbb{Z}\alpha_1'+\mathbb{Z}\alpha_2'+\mathbb{Z}\alpha_3'$.
\end{center}
\end{prop}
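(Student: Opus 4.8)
The plan is to reduce a general $\mathbb{Z}$-basis of $\mathcal{O}$ to one compatible with the trace form by a unipotent change of basis. First I would record the basic fact that every $\alpha \in \mathcal{O}$ satisfies $\tr(\alpha) \in \mathbb{Z}$ (noted in Section 3.2) and that $\tr(1) = 2$, so the trace defines a $\mathbb{Z}$-linear functional $\tr : \mathcal{O} \to \mathbb{Z}$ whose restriction to the rank-three sublattice $\mathbb{Z}\alpha_1 \oplus \mathbb{Z}\alpha_2 \oplus \mathbb{Z}\alpha_3$ I want to control. Write $t_i = \tr(\alpha_i)$ for $i = 1,2,3$.

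The key step is to split into cases according to the image of $(t_1, t_2, t_3)$ in $\mathbb{Z}/2$, using the fact that $1 \in \mathcal{O}$ has trace $2 \equiv 0$. I expect the cleanest route is: let $g = \gcd(t_1, t_2, t_3)$ (set $g = 0$ if all $t_i = 0$). If $g$ is even (in particular if $g = 0$), then every $t_i$ is even, and for each $i$ I can replace $\alpha_i$ by $\alpha_i' = \alpha_i - \tfrac{t_i}{2}\cdot 1 \in \mathcal{O}$, which has trace $0$; this gives $\mathcal{O} = \mathbb{Z} + \mathbb{Z}\alpha_1' + \mathbb{Z}\alpha_2' + \mathbb{Z}\alpha_3'$ with all three traces zero — and I then still need one generator of trace $1$. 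If $g$ is odd, then some $\mathbb{Z}$-combination $\beta = c_1\alpha_1 + c_2\alpha_2 + c_3\alpha_3$ has $\tr(\beta) = g$ odd; by a further unimodular change of basis among $\alpha_1, \alpha_2, \alpha_3$ (extending the primitive vector $(c_1,c_2,c_3)$ to a basis of $\mathbb{Z}^3$) I can assume $\tr(\alpha_3) = g$ is odd while keeping $\alpha_1, \alpha_2, \alpha_3$ a basis of the complement of $\mathbb{Z}$ in $\mathcal{O}$. The real obstruction to watch here is that I need trace exactly $1$, not merely odd, on the third generator; this is where I would use that $2 = \tr(1)$ lies in the trace image, so the image of $\tr$ on $\mathcal{O}$ is $2\mathbb{Z}$ or $\mathbb{Z}$, and in the odd case it must be all of $\mathbb{Z}$, so there is an element of trace exactly $1$. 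Concretely, after arranging $\tr(\alpha_3)$ odd I would adjust: since $\tr(\alpha_3)$ and $\tr(1) = 2$ generate $\mathbb{Z}$, pick integers with $a\,\tr(\alpha_3) + 2b = 1$, but since $\tr(\alpha_3)$ is odd I may as well take $\alpha_3' = \alpha_3 - \tfrac{\tr(\alpha_3)-1}{2}\cdot 1 \in \mathcal{O}$, which has trace $1$; this $\alpha_3'$ together with $1$ and the corrected $\alpha_1', \alpha_2'$ (each killed to trace $0$ by subtracting half their traces, which are now even after the unimodular adjustment) still spans $\mathcal{O}$.

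The one genuinely nontrivial point — and the step I expect to be the main obstacle — is the even case, where after making all three $\alpha_i$ have trace $0$ I must produce a fourth generator of trace $1$; but this case cannot occur for an order in a \emph{quaternion} algebra, because if $\tr$ vanished on all of $\mathbb{Z}\alpha_1 + \mathbb{Z}\alpha_2 + \mathbb{Z}\alpha_3$ then $\tr(\mathcal{O}) \subseteq 2\mathbb{Z}$, whereas $\mathcal{O}$ contains a maximal order locally at the split primes and in particular contains elements of odd trace (e.g.\ a matrix unit $e_{11}$ locally, whose trace is $1$). So I would argue that the reduced trace form on $\mathcal{O}$, being nondegenerate with the discriminant data forced by Definition~\ref{deffororders}, has odd image; hence $g$ is necessarily odd and only the odd case arises. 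Assembling: do the unimodular change of basis on $(\alpha_1,\alpha_2,\alpha_3)$ so that $\tr(\alpha_3)$ is odd and $\tr(\alpha_1), \tr(\alpha_2)$ are even (possible since a primitive vector realizing an odd trace-value extends to a basis, and one checks the other two coordinates can be taken with even trace by further elementary column operations subtracting multiples of the third), then set $\alpha_1' = \alpha_1 - \tfrac{\tr(\alpha_1)}{2}$, $\alpha_2' = \alpha_2 - \tfrac{\tr(\alpha_2)}{2}$, $\alpha_3' = \alpha_3 - \tfrac{\tr(\alpha_3)-1}{2}$, all lying in $\mathcal{O}$ since $1 \in \mathcal{O}$; the transition matrix from $(1,\alpha_1,\alpha_2,\alpha_3)$ to $(1,\alpha_1',\alpha_2',\alpha_3')$ is unipotent, hence unimodular, so $\mathcal{O} = \mathbb{Z} + \mathbb{Z}\alpha_1' + \mathbb{Z}\alpha_2' + \mathbb{Z}\alpha_3'$ with $\tr(\alpha_1') = \tr(\alpha_2') = 0$ and $\tr(\alpha_3') = 1$, as desired.
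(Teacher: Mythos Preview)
Your overall strategy is sound: reduce to showing that the trace map $\tr:\mathcal{O}\to\mathbb{Z}$ is surjective (equivalently that some $t_i=\tr(\alpha_i)$ is odd), then perform the evident unipotent change of basis. The paper itself gives no argument here, only a citation, so there is nothing to compare against; on its own terms your proof is correct except at one point.

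The step that is not fully justified is the claim that $\tr(\mathcal{O})=\mathbb{Z}$. Your explicit witness is a matrix idempotent $e_{11}$ ``at the split primes,'' but parity of the trace is a condition at $p=2$, and when $2\mid N_1$ the algebra ramifies at $2$ and $\mathcal{O}_2$ is not a matrix order, so the $e_{11}$ argument does not apply there. This is a genuine issue, not a formality: the Lipschitz order $\mathbb{Z}\langle 1,i,j,k\rangle$ in $\left(\frac{-1,-1}{\mathbb{Q}}\right)$ satisfies $\tr(\mathcal{O})=2\mathbb{Z}$, so surjectivity of trace really does depend on the specific local structure at $2$. Your fallback appeal to ``discriminant data forced by Definition~\ref{deffororders}'' is pointing in the right direction but is not yet an argument. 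The fix is a one-line local check: when $2\mid N_1$, Definition~\ref{deffororders} gives $\mathcal{O}_2\cong\mathcal{O}_2^{(2u+1)}$ built from $R_2=\mathbb{Z}_2+\tfrac{1+i}{2}\mathbb{Z}_2$, and for $\alpha=a+b\,\tfrac{1+i}{2}\in R_2$ the reduced trace of the corresponding element of $\mathcal{O}_2$ is $\alpha+\bar\alpha=2a+b$, which hits $1$ when $b=1$. Thus $\tr(\mathcal{O}_2)=\mathbb{Z}_2$ in all cases; since $\tr(\mathcal{O})=n\mathbb{Z}$ with $n\mid 2$ and $n\mathbb{Z}_2=\tr(\mathcal{O}_2)=\mathbb{Z}_2$, we get $n=1$, and your argument goes through.
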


Unless otherwise stated, we assume that for an order of level $(N_1,N_2)$, we have 
$\mathcal{O}=\mathbb{Z}+\mathbb{Z}\alpha_1+\mathbb{Z}\alpha_2+\mathbb{Z}\alpha_3$ 
with $\tr(\alpha_1)=\tr(\alpha_2)=0$ and $\tr(\alpha_3)=1$. 

The trace-zero lattice $\mathcal{O}^{0}=\mathcal{O}\cap Q^0=\mathbb{Z}\alpha_1+\mathbb{Z}\alpha_2+\mathbb{Z}(\alpha_3-1)$, 
where $Q^0=\{\alpha\in Q:\tr(\alpha)=0\}$, is an even integral lattice when equipped 
with the bilinear form $(x,y)\mapsto \tr(x\overline{y})$. Since the quaternion algebra 
$Q$ is definite, this form is positive definite.

To associate a ternary quadratic form to $\mathcal{O}$, we consider a related lattice. 
Define
\begin{equation*}
	S=\mathbb{Z}+2\mathcal{O}=\mathbb{Z}+\mathbb{Z}2\alpha_1+\mathbb{Z}2\alpha_2+\mathbb{Z}2\alpha_3,
\end{equation*}
and let
\begin{equation*}
	S^{0}=S\cap Q^0=\mathbb{Z}2\alpha_1+\mathbb{Z}2\alpha_2+\mathbb{Z}(2\alpha_3-1).
\end{equation*}
The lattice $S^{0}$, equipped with the bilinear form $(x,y)\mapsto \tr(x\overline{y})$, 
is an even integral lattice, and is also positive definite.

The norm form on $S^{0}$ defines a ternary quadratic form: for $(x,y,z)\in\mathbb{Z}^3$,
\begin{align*}
	f_{S^{0}}(x,y,z) & =\n(2x\alpha_1+2y\alpha_2+z(2\alpha_3-1))\\
	& =4\n(\alpha_1)x^2+4\n(\alpha_2)y^2+(4\n(\alpha_3)-1)z^2\\
	&\quad +4\tr(\alpha_2\overline{\alpha_3})yz+4\tr(\alpha_1\overline{\alpha_3})xz+4\tr(\alpha_1\overline{\alpha_2})xy.
\end{align*}
and the associated matrix of $f_{S^{0}}$ is:
\begin{equation*}
	M_{f_{S^{0}}}=
	\begin{pmatrix}
		4\tr(\alpha_1\overline{\alpha_1}) &4\tr(\alpha_1\overline{\alpha_2}) & 4\tr(\alpha_1\overline{\alpha_3})\\
		4\tr(\alpha_2\overline{\alpha_1}) &4\tr(\alpha_2\overline{\alpha_2}) & 4\tr(\alpha_2\overline{\alpha_3})\\
		4\tr(\alpha_3\overline{\alpha_1}) &4\tr(\alpha_3\overline{\alpha_2}) & 4\tr(\alpha_3\overline{\alpha_3})-2        
	\end{pmatrix}.
\end{equation*} 
Note that $ 2n(\alpha)=\tr(\alpha\overline{\alpha})$.

This construction gives a well-defined map from isomorphism classes of orders to 
equivalence classes of ternary quadratic forms:

\begin{prop}\label{propformapswelldefined}
	Let $\mathcal{O},\mathcal{O}'$ be orders of in $Q$, with 
	associated ternary quadratic forms $f_{S^{0}}$ and $f_{S'^{0}}$.
	\begin{enumerate}
		\item[(i)] If $\mathcal{O}$ and $\mathcal{O}'$ are isomorphic, then $f_{S^{0}}$ 
		and $f_{S'^{0}}$ are equivalent.
		\item[(ii)] If $\mathcal{O}$ and $\mathcal{O}'$ are locally isomorphic (at all 
		primes), then $f_{S^{0}}$ and $f_{S'^{0}}$ are in the same genus.
	\end{enumerate}
\end{prop}
\begin{proof}
The presented proposition extends the result of~\cite[Proposition 4.2]{LZ24} in a natural way; the proof employs essentially the same method.
\end{proof}

 To construct the inverse map from   ternary quadratic forms to an orders, we require the theory of Clifford algebra.  For background on the Clifford algebra, we refer to~\cite[Chapter 22]{Voi21}.
 
Let $f$ be a non-degenerate integral ternary quadratic form. We define $C_0(f)$ to be the even Clifford algebras over $\mathbb{Z}$ associated with $f$. Then $C_0(f)$ is an order 
in a quaternion algebra $Q $ over $\mathbb{Q}$,   we can associate to it a ternary quadratic form as 
follows. Let $\mathcal{O}^{\sharp}$ denote the dual lattice of $\mathcal{O}$ with 
respect to the trace pairing, and consider the trace-zero lattice
\begin{equation*}
	\Lambda =\mathcal{O}^{\sharp}\cap Q^0,
\end{equation*}
which is a 3-dimensional $\mathbb{Z}$-lattice in $Q^0$. If 
$\mathcal{O}^{\sharp}= \langle e_0', e_1', e_2', e_3'\rangle$, where 
$\{e_0', e_1', e_2', e_3'\}$ is the dual basis of $\{1, e_1, e_2, e_3\}$ with 
respect to the trace pairing, then $\Lambda=\langle e_1', e_2', e_3'\rangle$. 
We define the ternary quadratic form $f_{\mathcal{O}}$ associated to $\mathcal{O}$ by
\begin{equation*}
	f_{\mathcal{O}}(x,y,z)=\discrd(\mathcal{O})\cdot \n(xe_1'+ye_2'+ze_3'),
\end{equation*}
where $\discrd(\mathcal{O})$ denotes the discriminant of $\mathcal{O}$.

These two constructions establish a correspondence between orders in quaternion 
algebras and ternary quadratic forms, which we make precise in the following theorem.

\begin{theorem}\cite[Main Theorem 22.1.1]{Voi21}\label{Lem}
Let $R$ be a principal ideal domain. The maps $f\mapsto C_0(f)$ and $\mathcal{O}\mapsto f_{\mathcal{O}}$ are inverses to each other and the discriminants satisfy
\textnormal{discrd}$(\mathcal{O})=d(f_{\mathcal{O}})$. Furthermore, the maps give a bijection between equivalence classes of non-degenerate ternary quadratic forms integral over $R$ and isomorphism classes of quaternion $R$-orders.
\end{theorem}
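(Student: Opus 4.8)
The statement is the Clifford-algebra correspondence for ternary quadratic forms, and my plan is to build the two maps explicitly and check they are inverse, reducing to the local case. Since $R$ is a PID every $R$-lattice is free, so it suffices to prove everything after tensoring with each localization $R_{\mathfrak p}$ (a discrete valuation ring) and then glue; over a field the statement is the classical dictionary between non-degenerate ternary forms and quaternion algebras, which we extend integrally. \emph{Forms to orders.} Given a non-degenerate ternary quadratic form $f$ on $M=Re_1\oplus Re_2\oplus Re_3$, form the tensor algebra $T(M)$ and the Clifford algebra $C(f)=T(M)/\langle v\otimes v-f(v)\rangle$. The $\mathbb{Z}/2$-grading descends to $C(f)=C_0(f)\oplus C_1(f)$, and one checks directly that $C_0(f)$ is free of rank $4$ over $R$ with basis $1,\ e_2e_3,\ e_3e_1,\ e_1e_2$. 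The reversal anti-automorphism of $T(M)$ induces a standard involution on $C_0(f)$ with $\alpha\bar\alpha\in R$ for every $\alpha$, so $C_0(f)$ is a quaternion ring, and it is an order in a quaternion $\mathbb{Q}$-algebra exactly because $f$ is non-degenerate. Writing out the multiplication table of $C_0(f)$ in terms of the Gram matrix of $f$ yields $\discrd(C_0(f))=d(f)$.

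\emph{Orders to forms.} Conversely, for a quaternion $R$-order $\mathcal{O}=R\oplus Re_1\oplus Re_2\oplus Re_3$ the codifferent $\mathcal{O}^\sharp$ is a rank-$4$ lattice with dual basis $e_0',e_1',e_2',e_3'$, and $\Lambda=\mathcal{O}^\sharp\cap Q^0=\langle e_1',e_2',e_3'\rangle$ is a rank-$3$ lattice on the trace-zero space; the form $f_{\mathcal{O}}=\discrd(\mathcal{O})\cdot\n|_{\Lambda}$ is $R$-integral, and computing its Gram matrix against that of $\mathcal{O}$ gives $d(f_{\mathcal{O}})=\discrd(\mathcal{O})$.

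\emph{The maps are mutually inverse.} To see $C_0(f_{\mathcal{O}})\cong\mathcal{O}$ one exhibits a canonical map: inside $Q$ the product of two trace-zero elements lands in $\mathcal{O}^\sharp$ after rescaling by $\discrd(\mathcal{O})$, and the subring so generated is precisely $\mathcal{O}$; matching the distinguished bases identifies this with the Clifford construction. To see $f_{C_0(f)}\sim f$ one tracks the basis $e_1,e_2,e_3$ of $M$ through $C_0(f)$, computes the dual basis of $C_0(f)^\sharp$, intersects with the trace-zero part, and checks the resulting Gram matrix equals $G M_f G^{t}$ for an explicit $G\in GL_3(R)$. For the final bijection on classes: an $R$-algebra isomorphism $\mathcal{O}\cong\mathcal{O}'$ carries $\mathcal{O}^\sharp$ to $\mathcal{O}'^\sharp$, hence induces an equivalence $f_{\mathcal{O}}\sim f_{\mathcal{O}'}$ (this is Proposition~\ref{propformapswelldefined}), while an equivalence $f\sim g$ induces $C_0(f)\cong C_0(g)$ by functoriality of the Clifford construction; together with the inverse-maps statement these descend to a bijection between classes of forms and isomorphism classes of orders.

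The delicate step is the compatibility of the two constructions — the canonical isomorphism $C_0(f_{\mathcal{O}})\cong\mathcal{O}$ and the equivalence $f_{C_0(f)}\sim f$ — which hinges on correctly identifying the codifferent, its dual basis, and the normalizing factor $\discrd(\mathcal{O})$ in the definition of $f_{\mathcal{O}}$; over a field this is classical, and the extension to a PID is exactly the localization-and-glue reduction indicated above, using freeness of lattices over a PID. (Alternatively, one may simply invoke \cite[Main Theorem 22.1.1]{Voi21}, where all of this is carried out in detail.)
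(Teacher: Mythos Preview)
The paper does not prove this theorem; it is quoted verbatim from \cite[Main Theorem 22.1.1]{Voi21} and used as a black box, with only the explicit multiplication table for the Clifford order basis $e_i$ and the dual basis $e_i'$ recorded afterward for later computational use. Your sketch is the standard argument (and indeed is Voight's approach), so there is nothing in the paper to compare it against; as you yourself note at the end, simply citing Voight is exactly what the authors do.

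One small correction: your appeal to Proposition~\ref{propformapswelldefined} for the well-definedness on isomorphism classes is off target, since that proposition in the paper concerns the norm form $f_{\mathcal{O}^0}$ on $\mathcal{O}\cap Q^0$, not the Clifford form $f_{\mathcal{O}}$ on $\mathcal{O}^\sharp\cap Q^0$. The analogous statement for $f_{\mathcal{O}}$ is of course immediate (an $R$-algebra isomorphism carries codifferent to codifferent), but you should not cite that proposition for it.
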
 

\begin{remark}
	In what follows, we specialize to $R = \mathbb{Z}$ and provide explicit formulas for 
	the correspondence described in Theorem~\ref{Lem}.
\end{remark}

Let $f=(a,b,c,r,s,t)$ be a ternary quadratic form with discriminant $d_f=d$. 
The corresponding quaternion order $C_0(f) = \mathbb{Z} + \mathbb{Z}e_1 + \mathbb{Z}e_2 + \mathbb{Z}e_3$ 
satisfies the multiplication relations
\begin{equation*}
	\begin{aligned}
		e_1^2&=re_1-bc, & e_2e_3&=a\overline{e_1},\\
		e_2^2&=se_2-ac, & e_3e_1&=b\overline{e_2},\\
		e_3^2&=te_3-ab, & e_1e_2&=c\overline{e_3}.
	\end{aligned}
\end{equation*}
The dual basis $\{e_0', e_1', e_2', e_3'\}$ of $\{1, e_1, e_2, e_3\}$ with respect to the 
trace pairing satisfies $\mathcal{O}^{\sharp}\supseteq\mathcal{O}$ and 
$\tr(e_0')=1$, $\tr(e_1')=\tr(e_2')=\tr(e_3')=0$. Explicitly, we have
\begin{equation*}
	\begin{aligned}
		de_0'&=d-2(abc+rst)+(ar+st)e_1+(bs+rt)e_2+(ct+rs)e_3,\\
		de_1'&=ar+st-2ae_1-te_2-se_3,\\
		de_2'&=bs+rt-te_1-2be_2-re_3,\\
		de_3'&=ct+rs-se_1-re_2-2ce_3.
	\end{aligned}
\end{equation*}
These elements satisfy the norm and trace relations
\begin{equation*}
	\begin{aligned}
		\n(de_1')&=da, & \tr(de_2'\overline{de_3'})&=dr,\\
		\n(de_2')&=db, & \tr(de_3'\overline{de_1'})&=ds,\\
		\n(de_3')&=dc, & \tr(de_1'\overline{de_2'})&=dt.
	\end{aligned}
\end{equation*}
Since $\mathcal{O}$ contains a $\mathbb{Q}$-basis of $Q$, the elements 
$\{e_1', e_2', e_3'\}$ form a $\mathbb{Q}$-basis for the trace-zero subspace $Q^0$.

The correspondence between orders and quadratic forms preserves several important properties.

\begin{theorem}\label{theoremforcliffordalgebras}
	Let $\mathcal{O}$ be a quaternion order and $f_{\mathcal{O}}$ its associated ternary 
	quadratic form. Then:
	\begin{itemize}
		\item[(1)] $f_{\mathcal{O}}$ is positive definite if and only if $Q$ (equivalently, $\mathcal{O}$) 
		is a definite quaternion algebra.
		\item[(2)] $Q$ ramifies at a prime $p$ if and only if $f_{\mathcal{O}}$ is anisotropic 
		over $\mathbb{Q}_p$.
	\end{itemize}
\end{theorem}

We now characterize the genera of the quadratic forms $f_{S^{0}}$ and $f_{\mathcal{O}}$ 
associated to orders of a given level.

\begin{prop}\label{prop:genus_S0}
	Let $\mathcal{O}\subseteq Q_{N_0}$ be an order of level $(N_1,N_2)$. Then the ternary 
	quadratic form $f_{S^0}$ associated to $S = \mathbb{Z} + 2\mathcal{O}$ satisfies:
	\begin{itemize}
		\item $d_{f_{S^0}}=16(N_1N_2)^2$,
		\item $N_{f_{S^0}}=4N_1N_2$ (the level of $f_{S^0}$),
		\item $f_{S^0}$ is anisotropic over $\mathbb{Q}_p$ if and only if $p\mid N_1$.
	\end{itemize}
	The genus to which $f_{S^0}$ belongs is denoted by $G_{4N_1N_2,16(N_1N_2)^2,N_1}$.
\end{prop}

\begin{proof}
	By the local-global principle, it suffices to analyze the local behavior of $f_{S^0}$ 
	at each prime $p$. We examine several cases based on the $p$-adic valuations of $N_1$ 
	and $N_2$.
	
	\medskip
	\noindent\textbf{Case 1: $p^{2u+1}\parallel N_1$ for an odd prime $p$.}
	In this case, $f_{S^0}$ is locally equivalent at $p$ to
	\begin{equation*}
		f_{S^0}\underset{p}{\sim}-\epsilon x^2-p^{2u+1}y^2+\epsilon p^{2u+1}z^2,
	\end{equation*}
	where $\epsilon$ is a quadratic non-residue modulo $p$, and $\underset{p}{\sim}$ denotes 
	local equivalence at $p$. One verifies that $v_p(N_{f_{S^0}})=2u+1$ and $v_p(d_{f_{S^0}})=2(2u+1)=4u+2$. 
	Moreover, $f_{S^0}$ is anisotropic over $\mathbb{Q}_p$ since it represents no zeros 
	over $\mathbb{Q}_p$.
	
	\medskip
	\noindent\textbf{Case 2: $2^{2u+1}\parallel N_1$.}
	We have the local equivalence
	\begin{equation*}
		f_{S^0}\underset{2}{\sim}3x^2-2^{2u+3}(y^2+z^2+yz).
	\end{equation*}
	Again, one can verify that $v_2(N_{f_{S^0}})=2u+3$, $v_2(d_{f_{S^0}})=2(2u+3)$, and 
	$f_{S^0}$ is anisotropic over $\mathbb{Q}_2$.
	
	\medskip
	\noindent\textbf{Case 3: $p^{v}\parallel N_2$ for an odd prime $p$.}
	In this case,
	\begin{equation*}
		f_{S^0}\underset{p}{\sim}-x^2-p^{v}yz.
	\end{equation*}
	Here $v_p(N_{f_{S^0}})=v+2$ and $v_p(d_{f_{S^0}})=2v$. The form is isotropic over 
	$\mathbb{Q}_p$ (since it contains the hyperbolic plane $-p^v yz$).
	
	\medskip
	\noindent\textbf{Case 4: $2^{v}\parallel N_2$.}
	We have
	\begin{equation*}
		f_{S^0}\underset{2}{\sim}-x^2-2^{v+2}yz.
	\end{equation*}
	Similarly, $v_2(N_{f_{S^0}})=v+2$ and $v_2(d_{f_{S^0}})=2(v+2)$, and $f_{S^0}$ is isotropic 
	over $\mathbb{Q}_2$.
	
	\medskip
	Combining these local computations, we conclude that:
	\begin{itemize}
		\item $d_{f_{S^0}} = \prod_p p^{v_p(d_{f_{S^0}})} = 16(N_1N_2)^2$,
		\item $N_{f_{S^0}} = \prod_p p^{v_p(N_{f_{S^0}})} = 4N_1N_2$,
		\item $f_{S^0}$ is anisotropic at $p$ if and only if $p \mid N_1$.
	\end{itemize}
	This completes the proof.
\end{proof}

\begin{prop}\label{prop:genus_O}
	Let $\mathcal{O}\subseteq Q_{N_0}$ be an order of level $(N_1,N_2)$. Then the ternary 
	quadratic form $f_{\mathcal{O}}$ satisfies:
	\begin{itemize}
		\item $d_{f_{\mathcal{O}}}=N_1N_2$,
		\item $N_{f_{\mathcal{O}}}=4N_1N_2$ (the level of $f_{\mathcal{O}}$),
		\item $f_{\mathcal{O}}$ is anisotropic over $\mathbb{Q}_p$ if and only if $p\mid N_1$.
	\end{itemize}
	The genus to which $f_{\mathcal{O}}$ belongs is denoted by $G_{4N_1N_2,N_1N_2,N_1}$.
\end{prop}

\begin{proof}
	As in the proof of Proposition~\ref{prop:genus_S0}, we analyze the local behavior 
	of $f_{\mathcal{O}}$ at each prime dividing $N_1N_2$.
	
	\medskip
	\noindent\textbf{Case 1: $p^{2u+1}\parallel N_1$ for an odd prime $p$.}
	We have the local equivalence
	\begin{equation*}
		f_{\mathcal{O}}\underset{p}{\sim}-\epsilon p^{2u+1}x^2-y^2+\epsilon z^2,
	\end{equation*}
	where $\epsilon$ is a quadratic non-residue modulo $p$. One verifies that 
	$v_p(N_{f_{\mathcal{O}}})=2u+1$, $v_p(d_{f_{\mathcal{O}}})=2u+1$, and 
	$f_{\mathcal{O}}$ is anisotropic over $\mathbb{Q}_p$.
	
	\medskip
	\noindent\textbf{Case 2: $2^{2u+1}\parallel N_1$.}
	We have
	\begin{equation*}
		f_{\mathcal{O}}\underset{2}{\sim}3\cdot2^{2u+1}x^2-(y^2+z^2+yz).
	\end{equation*}
	Similarly, $v_2(N_{f_{\mathcal{O}}})=2u+3$, $v_2(d_{f_{\mathcal{O}}})=2u+1$, and 
	$f_{\mathcal{O}}$ is anisotropic over $\mathbb{Q}_2$.
	
	\medskip
	\noindent\textbf{Case 3: $p^{v}\parallel N_2$ for an odd prime $p$.}
	In this case,
	\begin{equation*}
		f_{\mathcal{O}}\underset{p}{\sim}-p^{v}x^2-yz.
	\end{equation*}
	Here $v_p(N_{f_{\mathcal{O}}})=v$, $v_p(d_{f_{\mathcal{O}}})=v$, and $f_{\mathcal{O}}$ 
	is isotropic over $\mathbb{Q}_p$.
	
	\medskip
	\noindent\textbf{Case 4: $2^{v}\parallel N_2$.}
	We have
	\begin{equation*}
		f_{\mathcal{O}}\underset{2}{\sim}-2^{v}x^2-yz.
	\end{equation*}
	Similarly, $v_2(N_{f_{\mathcal{O}}})=v+2$, $v_2(d_{f_{\mathcal{O}}})=v$, and 
	$f_{\mathcal{O}}$ is isotropic over $\mathbb{Q}_2$.
	
	\medskip
	Combining these local computations yields the stated discriminant, level, and 
	ramification properties of $f_{\mathcal{O}}$.
\end{proof}

\begin{prop}\label{propforO0andOat2}
	Let $\mathcal{O}$ be an order of level $(N_1,N_2)$. Let $\phi_{2N_1N_2}=\phi_{p_1}\circ\cdots\circ\phi_{p_{e(2N_1N_2)}}$, 
	where $p_1,\ldots,p_{e(2N_1N_2)}$ are the distinct prime divisors of $2N_1N_2$, and 
	$\phi_p$ denotes the Lehman's correspondence. Then 
	\begin{equation*}
		\phi_{2N_1N_2}(f_{S^0})=f_{\mathcal{O}}.
	\end{equation*}
\end{prop}

\begin{proof}
	The proof follows the same strategy as~\cite[Proposition 6.1]{LZ24}. When $p$ is an 
	odd prime, the Lehman's correspondence $\phi_p$ can be applied with $p$ replaced by 
	$p^{2u+1}$ (if $p^{2u+1} \parallel N_1$) or $p^v$ (if $p^v \parallel N_2$), and the 
	arguments of~\cite{LZ24} carry over verbatim in these cases.
	
	For the prime $2$, if $2 \nmid N_1N_2$, then the Watson transformation $\lambda_4$ 
	(which is equivalent to $\phi_2$) provides a bijection between the genera containing 
	$f_{\mathcal{O}}$ and $f_{S^0}$; see~\cite[Section 4.1]{LZ24} for details. When 
	$2 \mid N_1N_2$, the same substitution principle applies as for odd primes.
	
	Applying these transformations successively for all primes dividing $2N_1N_2$ yields 
	the desired equality $\phi_{2N_1N_2}(f_{S^0})=f_{\mathcal{O}}$.
\end{proof}

The relationship between orders and quadratic forms can be summarized in the following 
commutative diagram.

\begin{theorem}\label{thm:commutative_diagram}
	Let $Q_{N_0}$ be a positive definite quaternion algebra, and let 
	$\{\mathcal{O}_{\mu}\}_{\mu=1,2,\ldots,T_{N_1,N_2}}$ be a complete set of representatives 
	for the isomorphism classes of orders in $Q_{N_0}$ of level $(N_1,N_2)$. Define the map 
	$M_1:\mathcal{O}\mapsto f_{S^0}$, where $S = \mathbb{Z} + 2\mathcal{O}$. Then we have 
	the following commutative diagram:
	\begin{equation*}
		\xymatrix{
			\{\mathcal{O}_\mu\}_{\mu=1,2,\ldots,T_{N_1,N_2}} \ar@{<->}[r]^{C_0}  \ar[dr]^{M_1} & 
			G_{4N_1N_2,N_1N_2,N_1} \ar@{-->}[d]\\
			& G_{4N_1N_2,16(N_1N_2)^2,N_1}  \ar[u]^{\phi_{2N_1N_2}}
		}
	\end{equation*}
	where $C_0$ denotes the map $\mathcal{O} \mapsto f_{\mathcal{O}}$ from Theorem~\ref{Lem}.
\end{theorem}

\begin{proof}
	The commutativity of the diagram follows from Proposition~\ref{propforO0andOat2}. 
	The map $C_0$ sends each order $\mathcal{O}_\mu$ to its associated quadratic form 
	$f_{\mathcal{O}_\mu}$ in the genus $G_{4N_1N_2,N_1N_2,N_1}$ by Proposition~\ref{prop:genus_O}. 
	The map $M_1$ sends $\mathcal{O}_\mu$ to $f_{S^0_\mu}$ (where $S_\mu = \mathbb{Z} + 2\mathcal{O}_\mu$) 
	in the genus $G_{4N_1N_2,16(N_1N_2)^2,N_1}$ by Proposition~\ref{prop:genus_S0}. 
	Finally, Proposition~\ref{propforO0andOat2} shows that 
	$\phi_{2N_1N_2}(f_{S^0}) = f_{\mathcal{O}}$, establishing the commutativity 
	$C_0 = \phi_{2N_1N_2} \circ M_1$.
\end{proof}

As a consequence of Theorem~\ref{Lem} and Theorem~\ref{theoremforcliffordalgebras}, 
together with~\cite[Proposition 2.5]{LZ24}, we obtain the following corollary relating 
automorphism groups.

\begin{cor}\label{corollaryforcommutativediagrams3}
	The map $M_1$ induces a bijection between:
	\begin{itemize}
		\item equivalence classes of positive definite integral ternary quadratic forms in the 
		genus $G_{4N_1N_2,16(N_1N_2)^2,N_1}$, and
		\item isomorphism classes of positive definite quaternion orders of level $(N_1,N_2)$ 
		in $Q_{N_0}$.
	\end{itemize}
	Moreover, the automorphism groups are related by
	\begin{equation*}  
		2|\Aut(\mathcal{O})|=|\Aut(f_{S^0})|,
	\end{equation*}
	where $S = \mathbb{Z} + 2\mathcal{O}$.
\end{cor}

\begin{proof}
	The bijection follows from the bijectivity of the map $C_0$ in Theorem~\ref{Lem} 
	combined with the Lehman's correspondence in Proposition~\ref{propforO0andOat2}.
	
	For the automorphism count, we first establish the inequality 
	$2|\Aut(\mathcal{O})|\leq|\Aut(f_{S^0})|$. By~\cite[Proposition 4.2]{LZ24}, if 
	$\alpha\in Q^\times$ satisfies $\alpha^{-1}\mathcal{O}\alpha=\mathcal{O}$, then 
	$\pm U_\alpha\in\Aut(f_{S^0})$, where $U_\alpha$ is the orthogonal transformation 
	induced by conjugation by $\alpha$. If $U_\alpha=U_\beta$ for two such elements 
	$\alpha, \beta \in Q^\times$, then $\alpha\beta^{-1}$ acts trivially on the trace-zero 
	subspace $Q^0$, which implies $\alpha\beta^{-1}\in Z(Q)=\mathbb{Q}^\times$. 
	Since $\mathcal{O}$ is an order, we must have $\alpha\beta^{-1} = \pm 1$. This shows 
	that the map $\Aut(\mathcal{O}) \to \Aut(f_{S^0})/\{\pm I\}$ is injective, yielding 
	the inequality.
	
	To establish equality, we use a counting argument. By a detailed mass formula computation 
	following the methods of~\cite[Section 5]{BJa12} and~\cite{CS88}, one can show that
	\begin{equation*} 
		\sum\limits_{[f]\in G_{4N_1N_2,16(N_1N_2)^2,N_1}}\frac{1}{|\Aut(f)|}
		=2^{-e(N_1N_2)-1}\frac{N_1N_2}{12}\prod\limits_{p\mid N_1}\left(1-\frac{1}{p}\right)
		\prod\limits_{p\mid N_2}\left(1+\frac{1}{p}\right),
	\end{equation*}
	where $e(N_1N_2)$ denotes the number of distinct prime divisors of $N_1N_2$, and the 
	sum is over equivalence classes $[f]$ in the genus.
	
	On the other hand, by the bijection established above and the mass formula for quaternion 
	orders (see~\cite[Theorem 2.6]{LZ24}), we have
	\begin{equation*}  
		2\cdot\sum\limits_{[f]\in G_{4N_1N_2,16(N_1N_2)^2,N_1}}\frac{1}{|\Aut(f)|}
		=\sum_{\mu=1}^{T_{N_1,N_2}}\frac{1}{|\Aut(\mathcal{O}_\mu)|}.
	\end{equation*}
	
	Comparing these two identities and using the inequality $2|\Aut(\mathcal{O})|\leq|\Aut(f_{S^0})|$ 
	for each corresponding pair $(\mathcal{O}, f_{S^0})$, we conclude that equality must hold 
	for each pair. This completes the proof.
\end{proof}

\begin{remark}
	The factor of $2$ in the relation $2|\Aut(\mathcal{O})|=|\Aut(f_{S^0})|$ arises because 
	the map from $\Aut(\mathcal{O})$ to $\Aut(f_{S^0})$ has kernel of order $2$ when the center $\{\pm1\}$ is taken
	into account. More precisely, although $\alpha$ and $-\alpha$ represent the same element in $\Aut(\mathcal{O})$,
	they induce distinct elements in $\Aut(f_{S^0})$ since $-I\in\Aut(f_{S^0})$.
\end{remark}

\subsection{Counting zeros in orders}

In this subsection, we establish a precise relationship between counting zeros of quadratic 
polynomials in quaternion orders and counting representations by their associated ternary 
quadratic forms. Specifically, we show that computing
\begin{equation*}
	\sum_{\mu=1}^{T_{N_1,N_2}}\frac{\rho_{\mathcal{O}_\mu}(n,r)}{|\Aut(\mathcal{O}_\mu)|}
\end{equation*}
is equivalent to computing
\begin{equation*}
	\sum\limits_{f\in G_{4N_1N_2,16(N_1N_2)^2,N_1}}\frac{R_{f}(4n-r^2)}{|\Aut(f)|},
\end{equation*}
where $\rho_{\mathcal{O}}(n,r)$ denotes the number of solutions to $x^2 - rx + n = 0$ 
in an order $\mathcal{O}$, and $R_f(m)$ denotes the number of integral representations 
of $m$ by the ternary form $f$.

\begin{prop}\label{R}
	Let $\mathcal{O}$ be an order of level $(N_1,N_2)$ in $Q_{N_0}$, and let 
	$S = \mathbb{Z} + 2\mathcal{O}$ with associated quadratic form $f_{S^0}$. Then
	\begin{equation}\label{eq:representation_identity}
		R_{f_{S^{0}}}(4n-r^2)=\rho_\mathcal{O}(n,r),
	\end{equation}
	where $\rho_\mathcal{O}(n,r)$ counts the number of elements $\alpha \in \mathcal{O}$ 
	satisfying $\n(\alpha) = n$ and $\tr(\alpha) = r$.
\end{prop}

\begin{proof}
	The quadratic form $f_{S^0}$ is given by
	\begin{align*}
		f_{S^{0}}(x,y,z) 
		&= \n(x(2\alpha_1)+y(2\alpha_2)+z(2\alpha_3-1))\\
		&= 4\n(\alpha_1)x^2+4\n(\alpha_2)y^2+(4\n(\alpha_3)-1)z^2+4\tr(\alpha_2\overline{\alpha_3})yz+4\tr(\alpha_1\overline{\alpha_3})xz
		+4\tr(\alpha_1\overline{\alpha_2})xy.
	\end{align*}
	
	We now analyze the equation $f_{S^0}(x,y,z) = m$ according to $m \bmod 4$.
	
	\medskip
	\noindent\textbf{Case 1: $m \equiv 1,2 \pmod{4}$.}
	
	Since the coefficients of $x^2$ and $y^2$ are divisible by $4$ and the coefficient 
	of $z^2$ is $\equiv -1 \pmod{4}$, we have
	\begin{equation*}
		f_{S^{0}}(x,y,z) \equiv -(4\n(\alpha_3)-1)z^2 \equiv -z^2 \pmod{4}.
	\end{equation*}
	Thus $f_{S^0}(x,y,z) \equiv -z^2 \pmod{4}$, which can only be $\equiv 0$ or $3 \pmod{4}$. 
	Therefore, $R_{f_{S^{0}}}(m)=0$ for $m \equiv 1,2 \pmod{4}$.
	
	\medskip

\noindent\textbf{Case 2: $m = 4n - r^2 \equiv 0 \pmod{4}$ (i.e., $r$ even).}

When $m \equiv 0 \pmod{4}$, we must have $z \equiv 0 \pmod{2}$. We establish a bijection 
between solutions $(x,y,z) \in \mathbb{Z}^3$ to $f_{S^0}(x,y,z) = 4n - r^2$ and elements 
$\alpha \in \mathcal{O}$ with $\n(\alpha) = n$ and $\tr(\alpha) = r$.

\medskip
\noindent\emph{Direction 1: From $\mathcal{O}$ to $f_{S^0}$.}
Let $\alpha = x\alpha_1 + y\alpha_2 + z\alpha_3 + t \in \mathcal{O}$ with $\n(\alpha) = n$ 
and $\tr(\alpha) = r$. Since $\tr(\alpha) = z + 2t = r$, we have $t = \frac{r-z}{2} \in \mathbb{Z}$ 
(as $r$ and $z$ are both even). Let $\gamma = x\alpha_1 + y\alpha_2 + z\alpha_3 \in \mathcal{O}^0$.

Using the identity $\tr(\gamma\overline{\delta}) = \n(\gamma+\delta) - \n(\gamma) - \n(\delta)$ 
with $\delta = \frac{r-z}{2}$, we have
\begin{align*}
	n = \n(\alpha) 
	&= \n\left(\gamma + \frac{r-z}{2}\right)\\
	&= \n(\gamma) + \frac{r-z}{2}\tr(\gamma) + \frac{(r-z)^2}{4}\\
	&= \n(\gamma) + \frac{(r-z)z}{2} + \frac{(r-z)^2}{4}\\
	&= \n(\gamma) + \frac{r^2 - z^2}{4},
\end{align*}
where we used $\tr(\gamma) = z$ (since $\tr(\alpha_i) = 0$ for $i=1,2$ and $\tr(\alpha_3) = 1$).

Now, observe that $x(2\alpha_1) + y(2\alpha_2) + z(2\alpha_3-1) = 2\gamma - z$. 
Applying the norm identity again:
\begin{align*}
	f_{S^0}(x,y,z) 
	&= \n(2\gamma - z)\\
	&= \n(2\gamma) + \tr(2\gamma \cdot \overline{(-z)}) + \n(z)\\
	&= 4\n(\gamma) - 2z\tr(\gamma) + z^2\\
	&= 4\n(\gamma) - z^2.
\end{align*}

Substituting $\n(\gamma) = n - \frac{r^2 - z^2}{4}$, we obtain
\begin{equation*}
	f_{S^0}(x,y,z) = 4\left(n - \frac{r^2 - z^2}{4}\right) - z^2 = 4n - r^2.
\end{equation*}

\medskip
\noindent\emph{Direction 2: From $f_{S^0}$ to $\mathcal{O}$.}
Conversely, given $(x,y,z) \in \mathbb{Z}^3$ with $z$ even and $f_{S^0}(x,y,z) = 4n - r^2$, 
let $\gamma = x\alpha_1 + y\alpha_2 + z\alpha_3 \in \mathcal{O}^0$. From 
$f_{S^0}(x,y,z) = 4\n(\gamma) - z^2 = 4n - r^2$, we get 
$\n(\gamma) = n - \frac{r^2 - z^2}{4}$. 

Define $\alpha = \gamma + \frac{r-z}{2} \in \mathcal{O}$ (noting that 
$t = \frac{r-z}{2} \in \mathbb{Z}$ since $r$ and $z$ are both even). Then 
$\tr(\alpha) = z + 2t = r$ and the above calculation shows $\n(\alpha) = n$.

This establishes the bijection for $r, z$ both even.

	\medskip
	\noindent\textbf{Case 3: $m = 4n - r^2 \equiv 3 \pmod{4}$ (i.e., $r$ odd).}
	
	In this case, $r$ is odd and $z$ must be odd. The argument proceeds similarly with 
	$t = \frac{r-z}{2} \in \mathbb{Z}$ (since $r$ and $z$ have the same parity), and the 
	same calculation shows $f_{S^0}(x,y,z) = 4n - r^2$.
	
	\medskip
	The converse direction follows by reversing the correspondence: given $(x,y,z) \in \mathbb{Z}^3$ 
	with $f_{S^0}(x,y,z) = 4n - r^2$, the element 
	$\alpha = x\alpha_1 + y\alpha_2 + z\alpha_3 + \frac{r-z}{2} \in \mathcal{O}$ satisfies 
	$\n(\alpha) = n$ and $\tr(\alpha) = r$. This completes the proof of~\eqref{eq:representation_identity}.
\end{proof}

Combining Corollary~\ref{corollaryforcommutativediagrams3} with Proposition~\ref{R}, we 
obtain the main result of this subsection.

\begin{theorem}\label{thm:counting_equivalence}
	Let $\{\mathcal{O}_\mu\}_{\mu=1,\ldots,T_{N_1,N_2}}$ be a complete set of representatives 
	for the isomorphism classes of orders of level $(N_1,N_2)$ in $Q_{N_0}$. Then
	\begin{equation}\label{eq:counting_formula}
		\sum_{\mu=1}^{T_{N_1,N_2}}\frac{\rho_{\mathcal{O}_\mu}(n,r)}{|\Aut(\mathcal{O}_\mu)|}
		=2\sum\limits_{f\in G_{4N_1N_2,16(N_1N_2)^2,N_1}}\frac{R_{f}(4n-r^2)}{|\Aut(f)|},
	\end{equation}
	where the sum on the right is over equivalence classes of forms in the genus 
	$G_{4N_1N_2,16(N_1N_2)^2,N_1}$.
\end{theorem}

\begin{proof}
	By Corollary~\ref{corollaryforcommutativediagrams3}, the map $M_1: \mathcal{O}_\mu \mapsto f_{S_\mu^0}$ 
	(where $S_\mu = \mathbb{Z} + 2\mathcal{O}_\mu$) establishes a bijection between isomorphism 
	classes of orders $\{\mathcal{O}_\mu\}$ and equivalence classes of forms in 
	$G_{4N_1N_2,16(N_1N_2)^2,N_1}$, with the automorphism groups related by
	\begin{equation*}
		|\Aut(f_{S_\mu^0})| = 2|\Aut(\mathcal{O}_\mu)|.
	\end{equation*}
	
	By Proposition~\ref{R}, for each $\mu$ we have
	\begin{equation*}
		R_{f_{S_\mu^0}}(4n-r^2) = \rho_{\mathcal{O}_\mu}(n,r).
	\end{equation*}
	
	Therefore,
	\begin{align*}
		\sum_{\mu=1}^{T_{N_1,N_2}}\frac{\rho_{\mathcal{O}_\mu}(n,r)}{|\Aut(\mathcal{O}_\mu)|}
		&= \sum_{\mu=1}^{T_{N_1,N_2}}\frac{R_{f_{S_\mu^0}}(4n-r^2)}{|\Aut(\mathcal{O}_\mu)|}\\
		&= \sum_{\mu=1}^{T_{N_1,N_2}}\frac{2R_{f_{S_\mu^0}}(4n-r^2)}{|\Aut(f_{S_\mu^0})|}\\
		&= 2\sum\limits_{f\in G_{4N_1N_2,16(N_1N_2)^2,N_1}}\frac{R_{f}(4n-r^2)}{|\Aut(f)|},
	\end{align*}
	where the last equality uses the bijection from Corollary~\ref{corollaryforcommutativediagrams3}.
\end{proof}

\begin{remark}
	The factor of $2$ in equation~\eqref{eq:counting_formula} reflects the fact that the 
	automorphism group of a quaternion order is smaller by a factor of $2$ compared to the 
	automorphism group of its associated quadratic form, as explained in 
	Corollary~\ref{corollaryforcommutativediagrams3}.
\end{remark}

\begin{remark}
	Theorem~\ref{thm:counting_equivalence} reduces the problem of counting zeros of 
	quadratic polynomials in quaternion orders to the classical problem of counting 
	representations by ternary quadratic forms, which can be attacked using modular forms 
	and analytic methods. This connection is exploited in the subsequent sections.
\end{remark}

\subsection{Counting normalizers in orders}

For $n>4$, the conditions $n\mid r$ and $4n-r^2\geq0$ imply $r=0$. When $n=4$, one can verify that if $\n(\alpha)=4$ and $\tr(\alpha)=\pm4$, then $\alpha=\pm2$ and $4\nmid c_{11}(\pm2)$. By Remark~\ref{remarkfortwosidedprincipal}, we have
\begin{align*}
	T_{N_1,N_2} 
	& =2^{-e(N_1N_2)}\sum_{\mu=1}^{T_{N_1,N_2}}2^{e(N_1N_2)} \\
	& =2^{-e(N_1N_2)}\sum_{\mu=1}^{T_{N_1,N_2}}m(\mathcal{O}_\mu)\card(\mathfrak{B}(\mathcal{O}_\mu)/\mathbb{Q}^\times) \\
	& =2^{-e(N_1N_2)}\sum_{\mu=1}^{T_{N_1,N_2}}m(\mathcal{O}_\mu)\sum_{n\parallel N_1N_2}\sum_{\substack{n\mid r\\r^2\leq4n}}'\frac{\rho_{\mathcal{O}_\mu}(n,r)}{\card(\mathcal{O}_\mu^\times)} \\
	& =2^{-1}\sum_{\substack{n\parallel N_1N_2}}\sum_{\substack{n\mid r\\r^2\leq4n}}'\sum_{\mu=1}^{T_{N_1,N_2}}\frac{\rho_\mathcal{O}(n,r)}{\card(\Aut(\mathcal{O}_\mu))}\\
	& =\sum_{\substack{n\parallel N_1N_2\\n\leq3}}\sum_{\substack{n\mid r\\r^2\leq4n}}\sum\limits_{f\in G_{4N_1N_2,16(N_1N_2)^2,N_1}}\frac{R_{f}(4n-r^2)}{|\Aut(f)|}+\sum_{\substack{n\parallel N_1N_2\\n\geq4}}'\sum\limits_{f\in G_{4N_1N_2,16(N_1N_2)^2,N_1}}\frac{R_{f}(4n)}{|\Aut(f)|}.
\end{align*}

It remains to evaluate the second sum. The key is the following lemma relating representation numbers.

\begin{lemma}\label{propforrepresentationnumbersatp}
	Let $f\in C(p^gN,p^{2g}d)$ with $f=(a,p^gb,p^gc,p^gr,p^gs,p^gt)$, where $p$ is an odd prime with $p\nmid N$ and $p\nmid d$. Then
	\begin{equation*}
		\card(\{(x,y,z)\in\mathbb{Z}^3: f(x,y,z)=p^gn,\, p^{g}\mid x\})=R_{\phi_p(f)}(n). 
	\end{equation*}
	For $p=2$, we have
	\begin{equation*}
		\card(\{(x,y,z)\in\mathbb{Z}^3: f(x,y,z)=2^gn,\, 2^{g-1}\mid x\})=R_{\phi_2(f)}(n).
	\end{equation*}
\end{lemma}

\begin{proof}
	The proof is analogous to that of~\cite[Proposition 6.6]{LZ24}, with $p$ replaced by $p^{g}$ and variables $x$ and $z$ exchanged.
\end{proof} 

\begin{lemma}\label{lemmaforxingshu1}
	Let $\mathcal{O}$ be an order of level $(N_1,N_2)$ and $n\parallel N_1N_2$ with $n\geq4$. Set
	\begin{equation*}
		R_\mathcal{O}(n)=\{\alpha\in\mathcal{O}: \alpha\in N(\mathcal{O}),\, \n(\alpha)=n\}.
	\end{equation*}
	Then
	\begin{equation*}
		\card(R_\mathcal{O}(n))=R_{\phi_{2n}(f_{S^{0}})}(1).
	\end{equation*}
	Consequently,
	\begin{equation*}
		\sum_{\substack{n\parallel N_1N_2\\n\geq4}}'\sum\limits_{f\in G_{4N_1N_2,16(N_1N_2)^2,N_1}}\frac{R_{f}(4n)}{|\Aut(f)|}=\sum_{\substack{n\parallel N_1N_2\\n\geq4}}\sum\limits_{f\in G_{4N_1N_2,16(N_1N_2)^2,N_1}}\frac{R_{\phi_{2n}(f)}(1)}{|\Aut(\phi_{2n}(f))|}.
	\end{equation*}
\end{lemma}

\begin{proof}
	We first treat the case where $p$ is an odd prime with $p^{2u+1}\parallel N_1$. 
	
	By~\cite[Proposition 6.1]{LZ24} (with $p$ replaced by $p^{2u+1}$), we have
	\begin{equation*}
		\mathcal{O}=\mathbb{Z}e_0'+\mathbb{Z}e_1'+\mathbb{Z}p^{2u+1}e_2'+\mathbb{Z}p^{2u+1}e_3',
	\end{equation*}
	where $\tr(e_1')=\tr(e_2')=\tr(e_3')=0$, $\n(e_1')=a$ with $p\nmid a$ and $2\nmid a$, and the bilinear forms satisfy
	\begin{equation*}
		\n(p^{2u+1}e_2')=p^{2u+1}b,\quad \n(p^{2u+1}e_3')=p^{2u+1}c,\quad \tr(p^{2u+1}e_2'\overline{p^{2u+1}e_3'})=p^{2u+1}r,
	\end{equation*}
	with similar relations for the remaining inner products. This yields
	\begin{equation*}
		\mathcal{O}^0=\mathbb{Z}e_1'+\mathbb{Z}p^{2u+1}e_2'+\mathbb{Z}p^{2u+1}e_3',\quad
		S^{0}=\mathbb{Z}e_1'+\mathbb{Z}2p^{2u+1}e_2'+\mathbb{Z}2p^{2u+1}e_3',
	\end{equation*}
	and the quadratic forms
	\begin{align*}
		f_{S^{0}}(x,y,z)&=ax^2+4p^{2u+1}by^2+4p^{2u+1}cz^2+4p^{2u+1}ryz+2p^{2u+1}sxz+2p^{2u+1}txy,\\
		\phi_p(f_{S^{0}})(x,y,z)&=p^{2u+1}ax^2+4by^2+4cz^2+4ryz+2p^{2u+1}sxz+2p^{2u+1}txy.
	\end{align*}
	
	Since $p\nmid a=\n(e_1')$, we have $p\nmid c_{11}(e_1')$, while $p^{2u+1}\mid c_{11}(p^{2u+1}e_i')$ for $i=2,3$. By Lemma~\ref{propforrepresentationnumbersatp},
	\begin{equation*}
		\card(\{(x,y,z)\in\mathbb{Z}^3: f_{S^{0}}(x,y,z)=4n,\, p^{2u+1}\mid x\})=R_{\phi_p(f_{S^{0}})}(4n/p^{2u+1}).
	\end{equation*}
	
	We establish the bijection between such triples and normalizers. If $f_{S^{0}}(x,y,z)=4n$ with $p^{2u+1}\mid x$, then $2\mid x$. Setting $\alpha=(x/2)e_1'+yp^{2u+1}e_2'+zp^{2u+1}e_3'$, the condition $p^{2u+1}\mid x$ implies $p^{2u+1}\mid c_{11}(\alpha)$, so $\alpha\in N(\mathcal{O}_p)$ by Proposition~\ref{R}.
	
	Conversely, if $\alpha=xe_1'+yp^{2u+1}e_2'+zp^{2u+1}e_3'\in N(\mathcal{O}_p)$ with $\n(\alpha)=n$ and $\tr(\alpha)=0$, then $p^{2u+1}\mid c_{11}(\alpha)$. Since $p\nmid c_{11}(e_1')$, we have $p^{2u+1}\mid x$. Thus $\n(2\alpha)=4n$, giving the triple $(2x,y,z)$ with $f_{S^{0}}(2x,y,z)=4n$ and $p^{2u+1}\mid 2x$.
	
	The cases $p=2$ or $N_2=p^{v}$ are similar. The general case follows by induction on the number of prime divisors.
\end{proof}

Therefore, we obtain
\begin{equation*}
	T_{N_1,N_2} =\sum_{\substack{n\parallel N_1N_2\\n\leq3}}\sum_{\substack{n\mid r\\r^2\leq4n}}\sum\limits_{f\in G_{4N_1N_2,16(N_1N_2)^2,N_1}}\frac{R_{f}(4n-r^2)}{|\Aut(f)|}+\sum_{\substack{n\parallel N_1N_2\\n\geq4}}\sum\limits_{f\in G_{4N_1N_2,16(N_1N_2)^2,N_1}}\frac{R_{\phi_{2n}(f)}(1)}{|\Aut(\phi_{2n}(f))|}.
\end{equation*}

\section{\texorpdfstring{Extension of $H^{(N_1,N_2)}(D)$}{Extension of H(N1,N2)(D)}}\label{Sec:4}

When $N_1N_2$ is squarefree, Li, Skoruppa, and the second author~\cite{LSZ21} proved that for all Eichler orders with the same squarefree level in a definite quaternion algebra over $\mathbb{Q}$, a weighted sum of Jacobi theta series associated with these orders equals a Jacobi Eisenstein series whose Fourier coefficients are given by $H^{(N_1,N_2)}(4n-r^2)$. 

\begin{theorem}[\cite{LSZ21}, Main Theorem]\label{LSZ}
	Let $N$ and $F$ be two squarefree positive integers which are coprime, where N has an odd number of prime factors. Use $T_{N,F}$ for the type number of Eichler orders of level $F$ in $Q_N$, where $Q_N$ ramifies only at the primes which divides $N$. Choose a complete set of representatives $\mathcal{O}_{\mu}(\mu=1,2,\ldots,T_{N,F})$  for these types of Eichler orders. 
	Let
	\begin{equation*}
		\theta_{\mathcal{O_\mu}}=\sum_{\substack{n,r\in\mathbb{Z}\\4n-r^2\geq0}}\rho_{\mathcal{O_\mu}}(n,r)q^n\zeta^r
	\end{equation*}
	where $\rho_{\mathcal{O_\mu}}(n,r)$ is the number of zeros of $x^2-rx+n$ in $\mathcal{O}_{\mu}$. Then we have
	\begin{equation}
		\sum\limits_{\mu=1}^{T_{N,F}}\frac{\theta_{\mathcal{O_\mu}}}{\textnormal{card}(\Aut(\mathcal{O}_\mu))}=2^{-e(NF)}\sum_{\substack{n,r\in\mathbb{Z}\\4n-r^2\geq0}}H^{(N,F)}(4n-r^2)q^n\zeta^r,
	\end{equation}
	where $e(NF)$ is the number of prime factors of $NF$, and $\textnormal{card}(\Aut(\mathcal{O}_{\mu}))$ is the number of elements in the group of automorphisms of $\mathcal{O}_{\mu}$.  
\end{theorem}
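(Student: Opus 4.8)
The plan is to derive Theorem \ref{LSZ} from the Siegel--Weil formula for the genus of ternary quadratic forms attached to these Eichler orders, using the dictionary assembled in Sections \ref{Sec:3} and \ref{Sec:4}. Since $N$ is squarefree with an odd number of prime factors and $F$ is squarefree and coprime to $N$, we are in the special case $N_1=N_1'=N$, $N_2=F$ of the general set-up, so $4\nmid N_1N_2$ and every order of level $(N,F)$ is an Eichler order of level $F$ in $Q_N$. First I would rewrite the left-hand side in terms of ternary theta series. By Proposition \ref{R} one has $\rho_{\mathcal{O}_\mu}(n,r)=R_{f_{S_\mu^0}}(4n-r^2)$, so $\theta_{\mathcal{O}_\mu}=\sum_{\substack{n,r\in\mathbb{Z}\\ 4n-r^2\ge 0}}R_{f_{S_\mu^0}}(4n-r^2)\,q^n\zeta^r$; by Corollary \ref{corollaryforcommutativediagrams1} the map $M_1$ is a bijection from the type set onto the set of classes of the genus $G:=G_{4NF,16(NF)^2,N}$, with $|\Aut(f_{S_\mu^0})|=2\,\card(\Aut(\mathcal{O}_\mu))$. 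Hence
\begin{equation*}
\sum_{\mu=1}^{T_{N,F}}\frac{\theta_{\mathcal{O}_\mu}}{\card(\Aut(\mathcal{O}_\mu))}
=2\sum_{f\in G}\frac{1}{|\Aut(f)|}\sum_{\substack{n,r\in\mathbb{Z}\\ 4n-r^2\ge 0}}R_f(4n-r^2)\,q^n\zeta^r ,
\end{equation*}
and it suffices to show that $\sum_{f\in G}R_f(D)/|\Aut(f)|=2^{-e(NF)-1}H^{(N,F)}(D)$ for every integer $D=4n-r^2\ge 0$.

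The second step is to invoke the Siegel--Weil formula for positive definite ternary quadratic forms, which expresses $\sum_{f\in G}R_f(D)/|\Aut(f)|$ as a product $\prod_{p\le\infty}\alpha_p(D)$ of suitably normalized local representation densities, $\alpha_\infty(D)$ being the archimedean factor of a positive definite ternary form (of size $\asymp\sqrt{D}$), while for $D=0$ the same sum equals the mass $\sum_{f\in G}1/|\Aut(f)|=\mathrm{mass}(G)$. The local lattices comprising $G$ are precisely those read off from $f_{S^0}$ in the propositions of Section \ref{Sec:4}: at $p\nmid NF$ the localization is unimodular (with an extra $2$-adic scaling by $4$ at $p=2$), at $p\mid F$ it is of hyperbolic type, $\sim -x^2-p\,yz$ up to the relevant scaling, and at $p\mid N$ it is anisotropic --- essentially the unique anisotropic ternary $\mathbb{Z}_p$-lattice up to scaling. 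Computing $\alpha_p(D)$ from these local models by the classical local density formulas, the good primes contribute the Euler factors of the Hurwitz class number $H\!\left(D/f_{N,F}^2\right)$, the primes $p\mid F$ contribute the factor $A_{NF,p,1}(D)$ occurring in the definition of $H^{(N,F)}$, the ramified primes $p\mid N$ contribute $1-\left(\frac{-D/f_{N,F}^2}{p}\right)$, and the archimedean factor together with the overall normalization supplies $2^{-e(NF)-1}$, so that $\prod_{p\le\infty}\alpha_p(D)=2^{-e(NF)-1}H^{(N,F)}(D)$; for $D=0$ the Minkowski--Siegel mass formula gives $\mathrm{mass}(G)=2^{-e(NF)-1}\tfrac{NF}{12}\prod_{p\mid N}(1-\tfrac1p)\prod_{p\mid F}(1+\tfrac1p)=2^{-e(NF)-1}H^{(N,F)}(0)$. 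Matching the two generating functions coefficient by coefficient then yields Theorem \ref{LSZ}, exhibiting its right-hand side as the genus theta average, i.e. the Jacobi Eisenstein series of weight $3/2$ of the pertinent level.

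I expect the main obstacle to be the explicit local density calculations at $p=2$ and at the primes $p\mid F$. Because $f_{S^0}$ carries an extra factor $4$ at $2$, and because Lehman's maps $\phi_p$ and Watson's map $\lambda_4$ shift the $2$-adic invariants of a ternary form, one must be careful to feed the Siegel--Weil formula the correctly normalized local lattices; matching the resulting $2$-adic density --- including the case distinction according to the residue of $\Delta(-4n)$ modulo $8$ which is the source of the correction factor $C_p(n)$ of Theorem \ref{the:type} --- against the closed form defining $H^{(N,F)}$ is the most delicate piece of bookkeeping. The ramified primes, by contrast, are immediate since an anisotropic ternary form has a one-line local density, and the archimedean and good-prime factors are entirely classical. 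The same scheme, with $N_1,N_2$ in place of $N,F$, produces the non-squarefree generalization; the only difference is that the local factors at primes dividing $N_1$ and $N_2$ become the more elaborate expressions $A_{N_1N_2,p}$, $A_{N_1N_2,p,1}$ and $A_{N_1N_2,p,2}$.
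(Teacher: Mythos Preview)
The paper does not give its own proof of Theorem~\ref{LSZ}; it is quoted verbatim from \cite{LSZ21} and used as a known input. In Section~\ref{sec:six} the paper proves the generalization Theorem~\ref{the:pre} and, at the squarefree base case, simply writes ``When $N_1N_2$ is squarefree, it is clear by \cite[Theorem 1.2]{LSZ21}'' rather than redoing the computation.

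That said, your proposal is correct and matches precisely the method the paper uses to establish the general Theorem~\ref{the:pre}: translate $\rho_{\mathcal{O}_\mu}(n,r)$ into $R_{f_{S_\mu^0}}(4n-r^2)$ via Proposition~\ref{R}, pass to the genus $G_{4NF,16(NF)^2,N}$ via the bijection $M_1$ of Corollary~\ref{corollaryforcommutativediagrams1} (picking up the factor~$2$ in automorphism counts), apply Siegel--Weil, and compute the local densities. The paper carries out exactly these density computations in Propositions~\ref{anisotropic}, \ref{isotropic}, \ref{anisotropic2}, \ref{isotropic2} (with $u=0$, $v=1$ recovering the squarefree case), and the mass via Proposition~\ref{mass}. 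So your scheme would yield an independent proof of Theorem~\ref{LSZ} using only the machinery already in this paper, even though the paper itself chooses to cite the result.

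One small remark: the correction factor $C_p(n)$ you mention belongs to the type number formula (Theorem~\ref{the:type}), not to Theorem~\ref{LSZ} or Theorem~\ref{the:pre}; it enters only later, in Section~\ref{Sec:7}, when one computes $\sum_\mu R_{\phi_n(f_{S_\mu^0})}(4^{\delta})/|\Aut|$ via Lemma~\ref{lemmaforxingshu2}. For Theorem~\ref{LSZ} itself the $2$-adic bookkeeping is lighter than you anticipate.
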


We now present the generalized definition of $H^{(N_1,N_2)}(D)$.

\begin{definition}\label{def:H}
	Let $N_1=p_1^{2u_1+1}\cdots p_w^{2u_w+1}$, where $p_1,\ldots,p_w$ are distinct primes, $u_1,\ldots,u_w$ are nonnegative integers, and $w$ is odd. Let $N_2$ be a positive integer with $\gcd(N_1,N_2)=1$. 
	
	For any negative discriminant $-D$, let $f_{N_1,N_2}$ be the largest positive integer divisible only by primes dividing $N_1N_2$ such that $f_{N_1,N_2}^2\mid D$ and $-D/f_{N_1,N_2}^2$ remains a negative discriminant. Denote by $f_p$ the exact $p$-power dividing $f_{N_1,N_2}$, and let $H(D/f^2_{N_1,N_2})$ be the Hurwitz class number. Define
	\begin{equation}\label{Hn1n2}
		H^{(N_1,N_2)}(D)=H(D/f^2_{N_1,N_2})\prod\limits_{p\mid N_1N_2}A_{p}(D;N_1,N_2),
	\end{equation}
	where the local factors $A_p(D;N_1,N_2)$ are defined as follows.
	
	\medskip
	\noindent\textbf{Case 1:} If $v_p(pf^2_{N_1,N_2})<v_p(N_1N_2)$ and $p\mid D/f^2_{N_1,N_2}$, then
	\begin{equation*}
		A_{p}(D;N_1,N_2)=0.
	\end{equation*}
	
	\noindent\textbf{Case 2:} If $v_p(pf^2_{N_1,N_2})<v_p(N_1N_2)$ and $p\nmid D/f^2_{N_1,N_2}$:
	\begin{itemize}
		\item For $p\mid N_1$:
		\begin{equation*}
			A_{p}(D;N_1,N_2)=f^2_p\left(1-\left(\frac{-D/f^2_{N_1,N_2}}{p}\right)\right);
		\end{equation*}
		\item For $p\mid N_2$:
		\begin{equation*}
			A_{p}(D;N_1,N_2)=f^2_p\left(1+\left(\frac{-D/f^2_{N_1,N_2}}{p}\right)\right).
		\end{equation*}
	\end{itemize}
	
	\noindent\textbf{Case 3:} If $v_p(pf^2_{N_1,N_2})\geq v_p(N_1N_2)$:
	\begin{itemize}
		\item For $p\mid N_1$:
		\begin{equation*}
			A_{p}(D;N_1,N_2)=p^{v_p(N_1)-1}\left(1-\left(\frac{-D/f^2_{N_1,N_2}}{p}\right)\right);
		\end{equation*}
		\item For $p\mid N_2$ with $v_p(N_2)$ odd:
		\begin{equation*}
			A_p(D;N_1,N_2)=\frac{2p^{\frac{v_p(N_2)+1}{2}}f_p-p^{v_p(N_2)-1}(p+1)-\left(\frac{-D/f^2_{N_1,N_2}}{p}\right)(2p^{\frac{v_p(N_2)-1}{2}}f_p-p^{v_p(N_2)-1}(p+1))}{p-1};
		\end{equation*}
		\item For $p\mid N_2$ with $v_p(N_2)$ even:
		\begin{equation*}
			A_{p}(D;N_1,N_2)=\frac{(p^{\frac{v_p(N_2)}{2}}f_p-p^{v_p(N_2)-1})(p+1)-\left(\frac{-D/f^2_{N_1,N_2}}{p}\right)(p^{\frac{v_p(N_2)}{2}-1}f_p-p^{v_p(N_2)-1})(p+1)}{p-1}.
		\end{equation*}
	\end{itemize}
	
	Here $(\frac{\cdot}{p})$ denotes the Kronecker symbol. Finally, set
	\begin{equation*}
		H^{(N_1,N_2)}(0)=\frac{N_1N_2}{12}\prod\limits_{p\mid N_1}\left(1-\frac{1}{p}\right)\prod\limits_{p\mid N_2}\left(1+\frac{1}{p}\right),
	\end{equation*}
	and $H^{(N_1,N_2)}(D)=0$ for every positive integer $D\equiv1,2\pmod{4}$.
\end{definition}

\begin{remark}
	When $N_1N_2$ is squarefree, we have $v_p(pf^2_{N_1,N_2})\geq v_p(N_1N_2)=1$ for all $p\mid N_1N_2$. This gives
	\begin{equation*}
		A_p(D;N_1,N_2)=\begin{cases}
			1-\left(\frac{-D/f^2_{N_1,N_2}}{p}\right) & \text{if } p\mid N_1,\\[6pt]
			\dfrac{2pf_p-(p+1)-\left(\frac{-D/f^2_{N_1,N_2}}{p}\right)(2f_p-(p+1))}{p-1} & \text{if } p\mid N_2,
		\end{cases}
	\end{equation*}
	which agrees with the definition in~\cite[(1)]{LSZ21}.
\end{remark}

This section establishes the following generalization of Theorem~\ref{LSZ}. 

\begin{theorem}\label{the:pre}
	Let $N_1=p_1^{2u_1+1}\cdots p_w^{2u_w+1}$, where $p_1,\ldots,p_w$ are distinct primes, $u_1,\ldots,u_w$ are nonnegative integers, and $w$ is odd. Let $N_2$ be a positive integer with $\gcd(N_1,N_2)=1$. Denote by $T_{N_1,N_2}$ the type number of orders of level $(N_1,N_2)$, and let $\mathcal{O}_{\mu}$ $(\mu=1,2,\ldots,T_{N_1,N_2})$ be a complete set of representatives. For $\rho_{\mathcal{O}_\mu}(n,r)$ counting the zeros of $x^2-rx+n$ in $\mathcal{O}_{\mu}$, we have
	\begin{equation*}
		\sum\limits_{\mu=1}^{T_{N_1,N_2}}\frac{\rho_{\mathcal{O}_\mu}(n,r)}{\mathrm{card}(\Aut(\mathcal{O}_\mu))}=2^{-e(N_1N_2)}H^{(N_1,N_2)}(4n-r^2),
	\end{equation*}
	where $e(N_1N_2)$ is the number of prime factors of $N_1N_2$.
\end{theorem}

\subsection{Local representation densities}

When $N_1N_2$ is not squarefree, explicitly determining Fourier coefficients of Jacobi Eisenstein series requires the Siegel-Weil formula for ternary quadratic forms. For a genus $G$ of positive ternary forms with discriminant $d_G$,
\begin{equation*}
	\sum\limits_{f\in G}\frac{R_{f}(n)}{|\Aut(f)|}=4\pi M(G)\sqrt{\frac{n}{d_G}}\prod\limits_{p}d_{G,p}(n),
\end{equation*}
where the sum runs over class representatives, the product is over all primes, the mass is
\begin{equation*}
	M(G)=\sum\limits_{f\in G}\frac{1}{|\Aut(f)|},
\end{equation*}
and $d_{G,p}(n)$ denotes the $p$-adic local representation density:
\begin{equation*}
	d_{G,p}(n)=\lim_{t\to\infty}\frac{1}{p^{2t}}|\{(x,y,z)\in\mathbb{Z}^3: f(x,y,z)\equiv n\pmod{p^t}\}|,
\end{equation*}
where $f$ is any form in $G$. 

We require explicit formulas for local densities; proofs appear in Appendix~\ref{App:B}.

\begin{prop}[Anisotropic case, odd primes]\label{anisotropic}
	Let $u$ be a nonnegative integer, $p$ an odd prime with $\left(\frac{\epsilon}{p}\right)=-1$, and $p\nmid m$. Then
	\begin{equation*}
		d_{-\epsilon x^2+p^{2u+1}y^2-\epsilon p^{2u+1}z^2,p}(n)=
		\begin{cases}
			0 & \text{if }   n=mp^{2k+1},\, k<u,\\
			p^{k}\left(1-\left(\frac{-m}{p}\right)\right) & \text{if }  n=mp^{2k},\, k\leq u,\\
			p^{2u-k}\left(1+\frac{1}{p}\right) & \text{if }   n=mp^{2k+1},\, k\geq u,\\
			p^{2u-k}\left(1-\left(\frac{-m}{p}\right)\right)& \text{if }   n=mp^{2k},\, k>u.
		\end{cases}
	\end{equation*}
\end{prop}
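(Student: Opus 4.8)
The plan is to compute the local density $d_{f,p}(n)$ for $f = -\epsilon x^2 + p^{2u+1}y^2 - \epsilon p^{2u+1}z^2$ directly from the definition, by choosing $t$ large and counting solutions of $f(x,y,z) \equiv n \pmod{p^t}$, then dividing by $p^{2t}$. The key structural observation is that the $y$ and $z$ variables appear only through the combination $p^{2u+1}(y^2 - \epsilon z^2)$, so I would first analyze the auxiliary count $N_j := \card\{(y,z) \bmod p^{t} : y^2 - \epsilon z^2 \equiv w \pmod{p^j}\}$ for the relevant modulus $p^j = p^{t - (2u+1)}$ (accounting for the valuation of the coefficient). Since $\left(\frac{\epsilon}{p}\right) = -1$, the binary form $y^2 - \epsilon z^2$ is anisotropic over $\mathbb{Q}_p$ and in fact is (a scalar of) the norm form of the unramified quadratic extension; its representation counts are standard: it represents every $p$-adic unit, and represents $p^{2k} \cdot (\text{unit})$ but never $p^{2k+1} \cdot (\text{unit})$. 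I would record this as a preliminary computation, likely invoking Lemma~\ref{lemmafordensitiesatp1} and Lemma~\ref{lemmafordensitiesatp2} to get exact constants.

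Next I would split into cases according to $n = m p^e$ with $p \nmid m$, and within each case peel off the contribution of the $x$-variable: writing $n - p^{2u+1}(y^2 - \epsilon z^2) \equiv -\epsilon x^2 \pmod{p^t}$, for each admissible value of $w := y^2 - \epsilon z^2 \bmod p^j$ one counts $x$ with $x^2 \equiv -\epsilon^{-1}(n - p^{2u+1}w)$ via Lemma~\ref{lemmafordensitiesatp1}. The four cases in the statement correspond to: (i) $n = m p^{2k+1}$ with $k < u$ — here the exponent $2k+1 < 2u+1$ is odd and too small, so $p^{2u+1} \nmid n$ forces the equation to have no solutions modulo a sufficiently high power, giving $0$; (ii) $n = m p^{2k}$ with $k \le u$ — the $x$-term dominates, the $y,z$-term is negligible mod the relevant power, and one gets $p^k(1 - \left(\frac{-m}{p}\right))$ essentially from counting square roots of $-\epsilon^{-1} m p^{2k}$; (iii) $n = m p^{2k+1}$ with $k \ge u$ — now $p^{2u+1} \mid n$, the $x$-term must contribute valuation $\ge 2u+1$ which is odd hence impossible for $-\epsilon x^2$ unless $x$ itself carries valuation $\ge u+1$ (wait: $-\epsilon x^2$ has even valuation), so actually the $y,z$-term must absorb the odd valuation, which it cannot as it is anisotropic; the surviving solutions come from $x$ with high valuation and $(y,z)$ with valuation exactly $u$, yielding $p^{2u-k}(1 + \frac1p)$; (iv) $n = m p^{2k}$ with $k > u$ — similar bookkeeping with both terms contributing, giving $p^{2u-k}(1 - \left(\frac{-m}{p}\right))$.

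The main obstacle will be the careful bookkeeping of powers of $p$: one must choose $t$ large enough (e.g. $t > 2k + 2u + 2$), track which residues of $(y,z)$ survive, and correctly account for the $\frac{1}{p^{2t}}$ normalization together with the number of lifts of each solution from $p^j$ to $p^t$. In particular in cases (iii) and (iv) one needs the subtle point that $y^2 - \epsilon z^2 \equiv 0 \pmod{p^{2l}}$ has exactly $p^{2l}$ solutions modulo $p^{2l}$ beyond the obvious ones coming from $p^l \mid y, p^l \mid z$ — this uses anisotropy crucially (any solution of $y^2 \equiv \epsilon z^2$ forces $p \mid y, p \mid z$), so the set of zeros of the binary form mod $p^{2l}$ is exactly $p^l \mathbb{Z}/p^{2l}\mathbb{Z} \times p^l\mathbb{Z}/p^{2l}\mathbb{Z}$. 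I would isolate this as a sublemma. The remaining manipulations — summing $1 + \left(\frac{\cdot}{p}\right)$ over the relevant residues, and using Lemma~\ref{lemmafordensitiesatp2} to collapse the Legendre-symbol sum — are routine once the valuation structure is pinned down. An independent check against Siegel's formula for $d_{-x^2-yz,p}$ in the case $u = 0$ (where $-\epsilon x^2 + py^2 - \epsilon pz^2$ should be $\mathbb{Q}_p$-equivalent to the anisotropic form scaled appropriately) gives a useful sanity test of the constants.
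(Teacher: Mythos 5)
Your overall strategy is viable and is organized differently from the paper's proof. The paper does not stratify the count directly: it sets up two scaling recursions, namely $d_{-\epsilon x^2+p^{2u+1}y^2-\epsilon p^{2u+1}z^2,p}(p^l m)=p\,d_{-\epsilon x^2+p^{2u-1}y^2-\epsilon p^{2u-1}z^2,p}(p^{l-2}m)$ for $u>0$ and $d_{-\epsilon x^2+py^2-\epsilon pz^2,p}(p^l m)=\frac1p d_{-\epsilon x^2+py^2-\epsilon pz^2,p}(p^{l-2}m)$, and then evaluates the base cases $d(m)=1-\left(\frac{-m}{p}\right)$, $d(pm)=1+\frac1p$ and the vanishing case by exactly the square-root count and Legendre-sum lemmas (Lemmas \ref{lemmafordensitiesatp1}, \ref{lemmafordensitiesatp2}) you intend to use. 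The recursions make cases (iii) and (iv) of the statement almost automatic; your direct route instead needs the representation count of a unit by the anisotropic binary form $y^2-\epsilon z^2$ modulo $p^s$ (which is $p^{s-1}(p+1)$, and does follow from the same two lemmas) plus your sublemma that its zeros mod $p^{2l}$ are exactly $p^l\mathbb{Z}\times p^l\mathbb{Z}$. What your approach buys is transparency of the anisotropy mechanism; what the paper's buys is that the delicate stratification in case (iv) (summing over the valuation of $m+\epsilon x_0^2$) never has to be carried out explicitly.

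Two concrete points in your sketch must be repaired before it computes the right constants. In case (iii) your parenthetical is backwards: anisotropy gives $v_p(y^2-\epsilon z^2)=2\min(v_p(y),v_p(z))$, so $p^{2u+1}(y^2-\epsilon z^2)$ always has \emph{odd} valuation and is precisely the term that matches $v_p(n)=2k+1$, while $-\epsilon x^2$ is the term that cannot; the surviving solutions have $v_p(x)\geq k+1$ and $\min(v_p(y),v_p(z))=k-u$ (not ``exactly $u$'' as you wrote), after which the norm-form count yields the factor $1+\frac1p$. Also, your proposed $u=0$ sanity check against Siegel's value of $d_{-x^2-yz,p}$ is not valid: $-x^2-yz$ is isotropic at $p$, whereas $-\epsilon x^2+py^2-\epsilon pz^2$ is the anisotropic form, so they are not $\mathbb{Q}_p$-equivalent and their densities genuinely differ (already for unit $n$ one gets $1-\left(\frac{-n}{p}\right)$ versus $\left(1+\frac1p\right)+\frac1p\left(\left(\frac{-n}{p}\right)-1\right)$). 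With these fixed, your plan goes through and reproduces the stated four-case formula.
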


\begin{prop}[Isotropic case, odd primes]\label{isotropic}
	Let $p$ be an odd prime and $p\nmid m$. 
	\begin{enumerate}
		\item For $v_0$ even:
		\begin{equation*}
			d_{-x^2-p^{v_0}yz,p}(n)=
			\begin{cases}
				0 & \text{if }   n=mp^{2k+1},\, 2k+1<v_0,\\
				p^{k}\left(1+\left(\frac{-m}{p}\right)\right) & \text{if }  n=mp^{2k},\, 2k<v_0,\\
				p^{\frac{v_0}{2}-1}+p^{\frac{v_0}{2}}-p^{v_0-k-2}-p^{v_0-k-1} & \text{if }   n=mp^{2k+1},\, 2k+1>v_0,\\
				p^{\frac{v_0}{2}}+p^{\frac{v_0}{2}-1}+\left(\frac{-m}{p}\right)p^{v_0-k-1}-p^{v_0-k-1} & \text{if }   n=mp^{2k},\, 2k\geq v_0.
			\end{cases}
		\end{equation*}
		
		\item For $v_1$ odd:
		\begin{equation*}
			d_{-x^2-p^{v_1}yz,p}(n)=
			\begin{cases}
				0 & \text{if }  n=mp^{2k+1},\, 2k+1<v_1,\\
				p^{k}(1+(\frac{-m}{p})) & \text{if }   n=mp^{2k},\, 2k<v_1,\\
				2p^{\frac{v_1-1}{2}}-p^{v_1-k-2}-p^{v_1-k-1} & \text{if }   n=mp^{2k+1},\, 2k+1\geq v_1,\\
				2p^{\frac{v_1-1}{2}}-p^{v_1-k-1}+(\frac{-m}{p})p^{v_1-k-1}& \text{if }  n=mp^{2k},\, 2k>v_1.
			\end{cases}
		\end{equation*}
	\end{enumerate}
\end{prop}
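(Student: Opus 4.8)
# Proof proposal for Proposition \ref{isotropic}

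The plan is to follow the same reduction strategy as in the proof of Proposition \ref{anisotropic}: peel off powers of $p$ from $n$ by a change of variables in the congruence, reducing to a handful of base cases that are evaluated directly with Lemmas \ref{lemmafordensitiesatp1} and \ref{lemmafordensitiesatp2}. Write $n = p^l m$ with $p \nmid m$, and treat $l$ even ($l = 2k$) and $l$ odd ($l = 2k+1$) separately. The form $-x^2 - p^{v}yz$ has the feature that the cross term $yz$ carries the $p$-power, so the substitution behaviour differs from the diagonal anisotropic case; the key observation is that for the pair $(y,z)$ the number of solutions of $yz \equiv c \pmod{p^t}$ with $p \nmid c$ is $p^{t-1}(p-1)$, while reductions in $t$ interact with the $p^v$ factor in a way that must be tracked carefully.

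First I would establish the two elementary ``scaling'' recursions. When $l \geq 1$ and $v \geq 2$, substituting $x \mapsto px'$ is not forced, so instead one argues as in Proposition \ref{anisotropic} that $d_{-x^2-p^vyz,p}(p^l m) = p \cdot d_{-x^2 - p^{v-2}yz,p}(p^{l-2}m)$ whenever $l \geq 2$ and $v \geq 2$, coming from the count $|\{x \bmod p^{t-1}\}| \times (\text{solutions of } p^{v-2}yz \equiv \cdots)$ after dividing the congruence by $p^2$. Iterating this recursion drops $v$ to its parity representative: $v_0$ down to $0$ (the form $-x^2 - yz$, for which Siegel's formula quoted in the text applies), and $v_1$ down to $1$ (the form $-x^2 - pyz$). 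This also simultaneously reduces $l$, and one must book-keep the relation between $l$ and $v$ to land in the correct case; the bound $2k+1 < v_0$ forcing $d = 0$ comes out because after the recursion bottoms out one reaches a congruence $-x^2 \equiv (\text{unit}) \cdot p^{\text{positive}} \pmod p$ with no solution, exactly as in the anisotropic proof.

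Next I would compute the base densities $d_{-x^2-yz,p}(n)$ and $d_{-x^2-pyz,p}(n)$. For $v = 0$ the formula is the one Siegel's result already gives in the text (with $(\frac{1}{p}+1)$-type expressions), which after clearing denominators matches the claimed $p^k(1+(\frac{-m}{p}))$ and the $2k \geq v_0$ line. For $v = 1$ the form $-x^2 - pyz$ requires a short direct computation: split according to whether $p \mid z$, use that $yz$ ranges over units times $p^j$-multiples with the correct multiplicities, and apply Lemma \ref{lemmafordensitiesatp1} to count the $x$-solutions of $x^2 \equiv -n - pyz \pmod{p^t}$; summing the Legendre symbols via Lemma \ref{lemmafordensitiesatp2} collapses the cross-term contribution. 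This produces the $2p^{(v_1-1)/2}$ main term together with the $-p^{v_1-k-1}$ and $-p^{v_1-k-2}$ correction terms.

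Finally, I would reassemble: multiply the base-case value by the accumulated power of $p$ from the recursion (a power of the form $p^{v_0/2}$, $p^{v_0/2-1}$, or $p^{(v_1-1)/2}$ depending on which line and on the ratio of $l$ to $v$), and simplify to match each of the four displayed cases in both the even-$v$ and odd-$v$ tables. The main obstacle I anticipate is not any single hard idea but the careful case analysis at the ``boundary'' — precisely when $2k$ or $2k+1$ is close to $v_0$ or $v_1$ — where the recursion stops partway and one gets a genuine mix of a scaling factor and a base density evaluated at a small residual power of $p$; getting the off-by-one exponents right (e.g. distinguishing $2k+1 > v_0$ from $2k+1 \geq v_1$, and $2k \geq v_0$ from $2k > v_1$) is where errors are most likely to creep in, so I would verify the formulas against small numerical cases in SageMath as the paper does elsewhere.
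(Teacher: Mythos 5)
Your proposal is correct and follows essentially the same route as the paper: diagonalize $-x^2-p^{v}yz$ over $\mathbb{Z}_p$, apply the scaling recursion $d_{-x^2-p^{v}yz,p}(p^{l}m)=p\,d_{-x^2-p^{v-2}yz,p}(p^{l-2}m)$ to reduce to the residual forms $-x^2-yz$ (Siegel's quoted formula) and $-x^2-pyz$ (evaluated via Lemmas \ref{lemmafordensitiesatp1} and \ref{lemmafordensitiesatp2}), and then reassemble with the accumulated power of $p$, with the vanishing case coming from an odd $p$-valuation on the right-hand side of $x^2\equiv\cdots$. The only minor divergence is at the tail of the odd-$v$ case: the paper handles $d_{-x^2-pyz,p}(p^{l}m)$ for large $l$ via the mixed recursion $d_{-x^2-pyz,p}(p^{l}m)=2d_{-x^2-yz,p}(p^{l-2}m)-\tfrac{1}{p}d_{-x^2-pyz,p}(p^{l-2}m)$ together with the two base values $d_{-x^2-pyz,p}(m)$ and $d_{-x^2-pyz,p}(pm)$, whereas you propose a direct count splitting on the $p$-divisibility of $y,z$, which is an equally valid way to finish.
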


\begin{prop}[Anisotropic case, $p=2$]\label{anisotropic2}
	Let $n=4^{k}m$ be a negative discriminant with $-m$ fundamental discriminant (i.e., $m\equiv0,3\pmod{4}$ and $-m/4$ is not a discriminant when $4\mid m$). Then
	\begin{equation*}
		d_{3x^2-2^{2u+3}(y^2+z^2+yz),2}(4^km)=
		\begin{cases}
			2^{k+1}\left(1-\left(\frac{-m}{2}\right)\right) & \text{if }   m\equiv3\pmod{4},\, 2k<2u,\\
			0 & \text{if }   m_0\equiv1,2\pmod{4},\, 2k<2u,\\
			2^{2u+1-k}\left(1-\left(\frac{-m}{2}\right)\right)  & \text{if }  m\equiv3\pmod{4},\, 2k\geq2u,\\
			3\cdot2^{2u-k} & \text{if }   m_0\equiv1,2\pmod{4},\, 2k\geq2u,
		\end{cases}
	\end{equation*}
	where $m_0=m/4$ when $4\mid m$.
\end{prop}

\begin{prop}[Isotropic case, $p=2$]\label{isotropic2}
	Let $n=4^{k}m$ be a negative discriminant with $-m$ fundamental discriminant.
	\begin{enumerate}
		\item For $v_0$ even:
		\begin{equation*}
			d_{-x^2-2^{v_0+2}yz,2}(4^km)=
			\begin{cases}
				2^{k+1}\left(1+\left(\frac{-m}{2}\right)\right) & \text{if }   m\equiv3\pmod{4},\, 2k<v_0,\\
				0 & \text{if }   m_0\equiv1,2\pmod{4},\, 2k<v_0,\\
				3\cdot2^{v_0/2} & \text{if }   m\equiv7\pmod{8},\, 2k\geq v_0,\\
				3\cdot2^{v_0/2}-2^{v_0+1-k} & \text{if }  m\equiv3\pmod{8},\, 2k\geq v_0,\\
				3\cdot(2^{v_0/2}-2^{v_0+1-k}) & \text{if }   m_0\equiv1,2\pmod{4},\, 2k\geq v_0.
			\end{cases}
		\end{equation*}
		
		\item For $v_1$ odd:
		\begin{equation*}
			d_{-x^2-2^{v_1+2}yz,2}(4^km)=
			\begin{cases}
				2^{k+1}\left(1+\left(\frac{-m}{2}\right)\right) & \text{if }   m\equiv3\pmod{4},\, 2k< v_1+1,\\
				0 & \text{if }   m_0\equiv1,2\pmod{4},\, 2k< v_1+1,\\
				2^{(v_1+3)/2} & \text{if }   m\equiv7\pmod{8},\, 2k\geq v_1+1,\\
				2^{(v_1+3)/2}-2^{v_1+1-k} & \text{if }   m\equiv3\pmod{8},\, 2k\geq v_1+1,\\
				2^{(v_1+3)/2}-3\cdot2^{v_1+1-k} & \text{if }   m_0\equiv1,2\pmod{4},\, 2k\geq v_1+1.
			\end{cases}
		\end{equation*}
	\end{enumerate}
\end{prop}

\subsection{Proof of the theorem}

\begin{proof}[Proof of Theorem~{\upshape\ref{the:pre}}]
	By Proposition~\ref{R}, it suffices to prove
	\begin{equation*}
		\sum\limits_{f\in G_{4N_1N_2,16(N_1N_2)^2,N_1}}\frac{R_f(D)}{|\Aut(f)|}=2^{-e(N_1N_2)-1}H^{(N_1,N_2)}(D),
	\end{equation*}
	where $D=4n-r^2$. The squarefree case is~\cite[Theorem 1.2]{LSZ21}. 
	
	For general $N_1,N_2$, if $p^{2u+1}\parallel N_1$, then
	\begin{equation*}
		d_{G_{4N_1N_2,16(N_1N_2)^2,N_1},q}(D)=d_{G_{4N_1N_2,16(N_1N_2/p^{2u})^2,N_1},q}(D)
	\end{equation*}
	for primes $q\neq p$, and the masses satisfy
	\begin{equation*}
		\frac{M(G_{4N_1N_2,16(N_1N_2)^2,N_1})}{\sqrt{d_{G_{4N_1N_2,16(N_1N_2)^2,N_1}}}}=\frac{M(G_{4N_1N_2,16(N_1N_2/p^{2u})^2,N_1})}{\sqrt{d_{G_{4N_1N_2,16(N_1N_2/p^{2u})^2,N_1}}}}.
	\end{equation*}
	Thus we need only compare local densities $d_{G,p}(D)$ at $p$. The argument for $p^v\parallel N_2$ is analogous.
	
	\medskip
	\noindent\textbf{Case: $p^{2u+1}\parallel N_1$, $p$ odd.} 
	By Proposition~\ref{anisotropic}, when $v_p(pf^2_{N_1,N_2})<v_p(N_1N_2)$:
	\begin{itemize}
		\item If $p\mid D/f^2_{N_1,N_2}$: $d_{-\epsilon x^2+p^{2u+1}y^2-\epsilon p^{2u+1}z^2,p}(D)=0$.
		\item If $p\nmid D/f^2_{N_1,N_2}$: $d_{-\epsilon x^2+p^{2u+1}y^2-\epsilon p^{2u+1}z^2,p}(D)=f_p^2\cdot d_{-\epsilon x^2+py^2-\epsilon pz^2,p}(D)$.
	\end{itemize}
	When $v_p(pf^2_{N_1,N_2})\geq v_p(N_1N_2)$:
	\begin{equation*}
		d_{-\epsilon x^2+p^{2u+1}y^2-\epsilon p^{2u+1}z^2,p}(D)=p^{2u}\cdot d_{-\epsilon x^2+py^2-\epsilon pz^2,p}(D).
	\end{equation*}
	
	\medskip
	\noindent\textbf{Case: $p^{v_0}\parallel N_2$, $v_0$ even, $p$ odd.} 
	By Proposition~\ref{isotropic}, when $v_p(pf^2_{N_1,N_2})<v_p(N_1N_2)$:
	\begin{itemize}
		\item If $p\mid D/f^2_{N_1,N_2}$: $d_{-x^2-p^{v_0}yz,p}(D)=0$.
		\item If $p\nmid D/f^2_{N_1,N_2}$:
		\begin{equation*}
			d_{-x^2-p^{v_0}yz,p}(D)=\frac{f^2_p\left(1+\left(\frac{-D/f^2_{N_1,N_2}}{p}\right)\right)}{2f_p+\left(\frac{-D/f^2_{N_1,N_2}}{p}\right)-1}\cdot d_{-x^2-pyz,p}(D).
		\end{equation*}
	\end{itemize}
	Since
	\begin{equation*}
		2pf_p-p-1-\left(\frac{-D/f^2_{N_1,N_2}}{p}\right)(2f_p-p-1)=\left(2f_p+\left(\frac{-D/f^2_{N_1,N_2}}{p}\right)-1\right)\left(p-\left(\frac{-D/f^2_{N_1,N_2}}{p}\right)\right),
	\end{equation*}
	multiplying by $(p-1)^{-1}$ gives the desired factor $A_p(D;N_1,N_2)$.
	
	When $v_p(pf^2_{N_1,N_2})\geq v_p(N_1N_2)$:
	\begin{equation*}
		d_{-x^2-p^{v_0}yz,p}(D)=\frac{p^{\frac{v_0}{2}-1}f_p(p+1)-p^{v_0-1}\left(1-\left(\frac{-D/f^2_{N_1,N_2}}{p}\right)\right)}{2f_p+\left(\frac{-D/f^2_{N_1,N_2}}{p}\right)-1}\cdot d_{-x^2-pyz,p}(D/p^{v_0-1}).
	\end{equation*}
	Direct calculation shows
	\begin{multline*}
		\frac{2pf_p-p-1-\left(\frac{-D/f^2_{N_1,N_2}}{p}\right)(2f_p-p-1)}{p-1}\cdot\frac{p^{\frac{v_0}{2}-1}f_p(p+1)-p^{v_0-1}\left(1-\left(\frac{-D/f^2_{N_1,N_2}}{p}\right)\right)}{2f_p+\left(\frac{-D/f^2_{N_1,N_2}}{p}\right)-1}\\
		=\frac{(p^{\frac{v_0}{2}}f_p-p^{v_0-1})(p+1)-\left(\frac{-D/f^2_{N_1,N_2}}{p}\right)(p^{\frac{v_0}{2}-1}f_p-p^{v_0-1})(p+1)}{p-1}=A_p(D;N_1,N_2).
	\end{multline*}
	
	The case $v_1$ odd follows similarly. For $p=2$, apply Propositions~\ref{anisotropic2} and~\ref{isotropic2} analogously.
\end{proof}

\section{Proof of the type number formula}\label{Sec:5}

This section establishes the type number formula. The key step is to compute
\begin{equation*}
	\sum\limits_{f\in G_{4N_1N_2,16(N_1N_2)^2,N_1}}\frac{R_{\phi_{2n}(f)}(1)}{|\Aut(\phi_{2n}(f))|}.
\end{equation*}
We begin with local density calculations; proofs appear in Appendix~\ref{App:B}.

\begin{prop}\label{propfortypenumberatp}
	Let $u$ be a nonnegative integer, $p$ an odd prime with $\left(\frac{\epsilon}{p}\right)=-1$, and $v$ a nonnegative integer. Then
	\begin{equation*}
		d_{-\epsilon p^{2u+1}x^2+y^2-\epsilon z^2,p}(1)=1+\frac{1}{p}
		\quad\text{and}\quad
		d_{-p^{v}x^2-yz,p}(1)=1-\frac{1}{p}.
	\end{equation*}
\end{prop}

\begin{prop}\label{propfortypenumberat2}
	For any nonnegative integer $u$ and $v$,
	\begin{equation*}
		d_{3\cdot2^{2u+1}x^2-(y^2+z^2+yz),2}(1)=\frac{3}{2}
		\quad\text{and}\quad
		d_{-2^{v}x^2-yz,2}(1)=\frac{1}{2}.
	\end{equation*}
\end{prop}

The following lemma relates representation numbers before and after $\phi_{2n}$.

\begin{lemma}\label{lemmaforxingshu2}
	We have
	\begin{equation*}
		\sum\limits_{f\in G_{4N_1N_2,16(N_1N_2)^2,N_1}}\frac{R_{\phi_{2n}(f)}(1)}{|\Aut(\phi_{2n}(f))|}=2^{-e(N_1N_2)-1}H^{(N_1,N_2)}(4n)\prod_{p\mid n}\frac{1-\left(\frac{\Delta(-4n)}{p}\right)/p}{B_p(n)C_p(n)}.
	\end{equation*}
\end{lemma}

\begin{proof}
	By the Siegel-Weil formula,
	\begin{equation*}
		\sum\limits_{f\in G}\frac{R_{f}(m)}{|\Aut(f)|}=4\pi M(G)\sqrt{\frac{m}{d_G}}\prod\limits_{p}d_{G,p}(m).
	\end{equation*}
	According to~\cite[Proposition 2.5]{LZ24},  the mass term $M(G)\sqrt{m/d_G}$ remains constant under the transformation $\phi_{p}$ for $p\neq 2$. Thus we focus on comparing local representation densities.
	
	\medskip
	\noindent\textbf{Case 1: $p^{2u+1}\parallel n$, $p$ odd.}
	By Propositions~\ref{propfortypenumberatp} and~\ref{anisotropic},
	\begin{equation*}
		d_{-\epsilon x^2+p^{2u+1}y^2-\epsilon p^{2u+1}z^2,p}(4n)=p^{u}\left(1+\frac{1}{p}\right),
		\quad
		d_{-\epsilon p^{2u+1}x^2+y^2-\epsilon z^2,p}(1)=1+\frac{1}{p}.
	\end{equation*}
	Therefore,
	\begin{equation*}
		d_{-\epsilon p^{2u+1}x^2+y^2-\epsilon z^2,p}(1)=p^{-u}\cdot d_{-\epsilon x^2+p^{2u+1}y^2-\epsilon p^{2u+1}z^2,p}(4n).
	\end{equation*}
	Since $1-(\frac{\Delta(-4n)}{p})/p=1$ when $p^{2u+1}\parallel n$, this gives the correct factor.
	
	\medskip
	\noindent\textbf{Case 2: $p^{v_0}\parallel n$ ($v_0$ even), $p$ odd.}
	Write $n=p^{v_0}m$. By Propositions~\ref{propfortypenumberatp} and~\ref{isotropic},
	\begin{equation*}
		d_{-x^2-p^{v_0}yz,p}(4n)=p^{v_0/2}\left(1+\left(\frac{-m}{p}\right)/p\right),
		\quad
		d_{-p^{v_0}x^2-yz,p}(1)=1-\frac{1}{p}.
	\end{equation*}
	Hence
	\begin{equation*}
		d_{-p^{v_0}x^2-yz,p}(1)=\frac{1-\left(1+\left(\frac{-m}{p}\right)/p\right)}{p^{v_0/2-1}(p+1)}\cdot d_{-x^2-p^{v_0}yz,p}(4n).
	\end{equation*}
	This matches $B_p(n)C_p(n)$ from Definition~(\ref{eq:Bp}),(\ref{eq:Cp}).
	
	\medskip
	\noindent\textbf{Case 3: $p^{v_1}\parallel n$ ($v_1$ odd), $p$ odd.}
	Write $n=p^{v_1}m$. By Propositions~\ref{propfortypenumberatp} and~\ref{isotropic},
	\begin{equation*}
		d_{-x^2-p^{v_1}yz,p}(4n)=p^{(v_1-1)/2}\left(1-\frac{1}{p}\right),
		\quad
		d_{-p^{v_1}x^2-yz,p}(1)=1-\frac{1}{p}.
	\end{equation*}
	Therefore,
	\begin{equation*}
		d_{-p^{v_1}x^2-yz,p}(1)=p^{-(v_1-1)/2}\cdot d_{-x^2-p^{v_1}yz,p}(4n).
	\end{equation*}
	Again, $1-(\frac{\Delta(-4n)}{p})/p=1$ when $v_1$ is odd.
	
	\medskip
	\noindent\textbf{Case 4: $2^{2u+1}\parallel n$.}
	Here $4n=4^{u+1}\cdot 2m$. By Propositions~\ref{propfortypenumberat2} and~\ref{anisotropic2},
	\begin{equation*}
		d_{3x^2-2^{2u+3}(y^2+z^2+yz),2}(4n)=3\cdot2^{u},
		\quad
		d_{3\cdot2^{2u+1}x^2-(y^2+z^2+yz),2}(1)=\frac{3}{2}.
	\end{equation*}
	Since $2^{2u+3}\parallel 4n$, $2^{4u+6}\parallel d_f$, and $2^{2u+1}\parallel d_{\phi_{2}(f)}$, we have $\sqrt{1/d_{\phi_{2}(f)}}=2\sqrt{4n/d_f}$. Thus
	\begin{equation*}
		2\cdot d_{3\cdot2^{2u+1}x^2-(y^2+z^2+yz),2}(1)=2^{-u}\cdot d_{3x^2-2^{2u+3}(y^2+z^2+yz),2}(4n).
	\end{equation*}
	
	\medskip
	\noindent\textbf{Case 5: $2^{v_0}\parallel n$ ($v_0$ even).}
	Here $4n=4^{v_0/2+1}m$. By Propositions~\ref{propfortypenumberat2} and~\ref{isotropic2},
	\begin{equation*}
		d_{-x^2-2^{v_0+2}yz,2}(4n)=
		\begin{cases}
			3\cdot2^{v_0/2} & \text{if }  m\equiv7\pmod{8},\\
			2^{v_0/2+1} & \text{if }  m\equiv3\pmod{8},\\
			3\cdot2^{v_0/2-1} & \text{if }  m\equiv1,2\pmod{4},
		\end{cases}
	\end{equation*}
	and $d_{-2^{v_0}x^2-yz,2}(1)=1/2$. Since $\sqrt{1/d_{\phi_{2}(f)}}=2\sqrt{4n/d_f}$,
	\begin{equation*}
		2\cdot d_{-2^{v_0}x^2-yz,2}(1)=\frac{1-(-m|2)/2}{2^{v_0/2-1}\cdot 3\alpha(m)}\cdot d_{-x^2-2^{v_0+2}yz,2}(4n),
	\end{equation*}
	where $\alpha(m)=1$ if $m\equiv7\pmod{8}$, $\alpha(m)=2$ if $m\equiv3\pmod{8}$, and $\alpha(m)=1$ if $m\equiv1,2\pmod{4}$. This simplifies to
	\begin{equation*}
		2\cdot d_{-2^{v_0}x^2-yz,2}(1)=\frac{1-(\Delta(-4n)|2)/2}{B_2(n)C_2(n)}\cdot d_{-x^2-2^{v_0+2}yz,2}(4n).
	\end{equation*}
	
	\medskip
	\noindent\textbf{Case 6: $2^{v_1}\parallel n$ ($v_1$ odd).}
	Here $4n=4^{(v_1+1)/2}\cdot 2m$. By Propositions~\ref{propfortypenumberat2} and~\ref{isotropic2},
	\begin{equation*}
		d_{-x^2-2^{v_1+2}yz,2}(4n)=2^{(v_1-1)/2},
		\quad
		d_{-2^{v_1}x^2-yz,2}(1)=\frac{1}{2}.
	\end{equation*}
	Therefore,
	\begin{equation*}
		2\cdot d_{-2^{v_1}x^2-yz,2}(1)=2^{-(v_1-1)/2}\cdot d_{-x^2-2^{v_1+2}yz,2}(4n).
	\end{equation*}
	
	Combining all cases and applying the Siegel-Weil formula completes the proof.
\end{proof}

\subsection{Proof of the type number formula}

We now assemble the proof of Theorem~\ref{the:type}.

\begin{proof}[Proof of Theorem~\ref{the:type}]
	By equations~\eqref{XINGSHU0}, \eqref{XINGSHU1}, \eqref{XINGSHU2}, together with Corollary~\ref{corollaryforcommutativediagrams3}, Proposition~\ref{R}, Lemmas~\ref{lemmaforxingshu1} and~\ref{lemmaforxingshu2}, we obtain
	\begin{align*}
		T_{N_1,N_2} 
		&= 2^{-e(N_1N_2)}\sum_{\mu=1}^{T_{N_1,N_2}}2^{e(N_1N_2)} \\
		&= 2^{-e(N_1N_2)}\sum_{\mu=1}^{T_{N_1,N_2}}m(\mathcal{O}_\mu)\cdot\card(\mathfrak{B}(\mathcal{O}_\mu)/\mathbb{Q}^\times) \\
		&= 2^{-e(N_1N_2)}\sum_{\mu=1}^{T_{N_1,N_2}}m(\mathcal{O}_\mu)\sum_{n\parallel N_1N_2}\sum_{\substack{n\mid r\\r^2\leq4n}}'\frac{\rho_{\mathcal{O}_\mu}(n,r)}{\card(\mathcal{O}_\mu^\times)} \\
		&= 2^{-1}\sum_{n\parallel N_1N_2}\sum_{\substack{n\mid r\\r^2\leq4n}}'\sum_{\mu=1}^{T_{N_1,N_2}}\frac{\rho_{\mathcal{O}_\mu}(n,r)}{\card(\Aut(\mathcal{O}_\mu))}\\
		&= \sum_{\substack{n\parallel N_1N_2\\n\leq3}}\sum_{\substack{n\mid r\\r^2\leq4n}}\sum\limits_{f\in G_{4N_1N_2,16(N_1N_2)^2,N_1}}\frac{R_{f}(4n-r^2)}{|\Aut(f)|} +\sum_{\substack{n\parallel N_1N_2\\n\geq4}}'\sum\limits_{f\in G_{4N_1N_2,16(N_1N_2)^2,N_1}}\frac{R_{f}(4n)}{|\Aut(f)|}\\
		&= \sum_{\substack{n\parallel N_1N_2\\n\leq3}}\sum_{\substack{n\mid r\\r^2\leq4n}}\sum\limits_{f\in G_{4N_1N_2,16(N_1N_2)^2,N_1}}\frac{R_{f}(4n-r^2)}{|\Aut(f)|} +\sum_{\substack{n\parallel N_1N_2\\n\geq4}}\sum\limits_{f\in G_{4N_1N_2,16(N_1N_2)^2,N_1}}\frac{R_{\phi_{2n}(f)}(1)}{|\Aut(\phi_{2n}(f))|}\\
		&= 2^{-e(N_1N_2)-1}\Bigg(\sum_{\substack{n\parallel N_1N_2\\n\leq3}}\sum_{\substack{n\mid r\\r^2\leq4n}}H^{(N_1,N_2)}(4n-r^2)  +\sum_{\substack{n\parallel N_1N_2\\n\geq4}}H^{(N_1,N_2)}(4n)\prod_{p\mid n}\frac{1-(\frac{\Delta(-4n)}{p})/p}{B_p(n)C_p(n)}\Bigg).
	\end{align*}
	
	For $n\leq 3$, direct verification shows
	\begin{equation*}
		\prod_{p\mid n}\frac{1-(\frac{\Delta(-4n)}{p})/p}{B_p(n)C_p(n)}=1.
	\end{equation*}
	This completes the proof of the type number formula.
\end{proof}

\begin{remark}
	The restriction $n\leq 3$ in the final step ensures that we can uniformly treat small values. For $n=1,2,3$, the discriminant $\Delta(-4n)$ satisfies $p\mid \Delta(-4n)$ for all $p\mid n$, which forces the Kronecker symbol to vanish appropriately.
\end{remark}

\newpage
\appendix

\section{\texorpdfstring{$h_{N_1,N_2}$ and $T_{N_1,N_2}$ for orders of level $(N_1,N_2)$}{h\_N1,N2 and T\_N1,N2 for orders of level (N1,N2)}}\label{App:A}
%The following results were calculated by SageMath employing our type number formula. 
\begin{longtable}{@{}c c c c c c c c c c@{}}
\caption{$h_{N_1,N_2}$ and $T_{N_1,N_2}$ for orders of level $(N_1,N_2)$ %where $N_1N_2 \leq 100$
}\label{tab:typenumber}\\
\toprule[1.5pt]
\multicolumn{1}{c}{$N_1N_2$} & 
\multicolumn{1}{c}{$N_1$} & 
\multicolumn{1}{c}{$N_2$} & 
\multicolumn{1}{c}{$h_{N_1,N_2}$} & 
\multicolumn{1}{c}{$T_{N_1,N_2}$} & 
\multicolumn{1}{c}{$N_1N_2$} & 
\multicolumn{1}{c}{$N_1$} & 
\multicolumn{1}{c}{$N_2$} & 
\multicolumn{1}{c}{$h_{N_1,N_2}$} & 
\multicolumn{1}{c}{$T_{N_1,N_2}$} \\
\midrule[1pt]
\endfirsthead

\multicolumn{9}{@{}l@{}}{\small\textbf{Table \thetable\ (continued)}} \\
\toprule[1.5pt]
\multicolumn{1}{c}{$N_1N_2$} & 
\multicolumn{1}{c}{$N_1$} & 
\multicolumn{1}{c}{$N_2$} & 
\multicolumn{1}{c}{$h_{N_1,N_2}$} & 
\multicolumn{1}{c}{$T_{N_1,N_2}$} & 
\multicolumn{1}{c}{$N_1N_2$} & 
\multicolumn{1}{c}{$N_1$} & 
\multicolumn{1}{c}{$N_2$} & 
\multicolumn{1}{c}{$h_{N_1,N_2}$} & 
\multicolumn{1}{c}{$T_{N_1,N_2}$} \\
\midrule[1pt]
\endhead

\midrule[0.5pt]
\multicolumn{9}{r@{}}{\footnotesize\textit{Continued on next page}} \\
\endfoot

\bottomrule[1.5pt]
\endlastfoot

2 & 2 & 1 & 1 & 1 & 56  & 7 & 8 & 6 & 4 \\
3 & 3 & 1 & 1 & 1 & 56  & 8 & 7 & 4 & 2 \\
5 & 5 & 1 & 1 & 1 & 57  & 3 & 19  & 4 & 3 \\
6 & 2 & 3 & 1 & 1 & 57  & 19  & 3 & 6 & 2 \\
6 & 3 & 2 & 1 & 1 & 58  & 2 & 29  & 3 & 2 \\
7 & 7 & 1 & 1 & 1 & 58  & 29  & 2 & 7 & 3 \\
8 & 8 & 1 & 1 & 1 & 59  & 59  & 1 & 6 & 6 \\
10  & 2 & 5 & 1 & 1 & 60  & 3 & 20  & 6 & 2 \\
10  & 5 & 2 & 1 & 1 & 60  & 5 & 12  & 8 & 3 \\
11  & 11  & 1 & 2 & 2 & 61  & 61  & 1 & 5 & 4 \\
12  & 3 & 4 & 1 & 1 & 62  & 2 & 31  & 4 & 2 \\
13  & 13  & 1 & 1 & 1 & 62  & 31  & 2 & 8 & 5 \\
14  & 2 & 7 & 2 & 1 & 63  & 7 & 9 & 6 & 3 \\
14  & 7 & 2 & 2 & 2 & 65  & 5 & 13  & 6 & 3 \\
15  & 3 & 5 & 2 & 1 & 65  & 13  & 5 & 6 & 3 \\
15  & 5 & 3 & 2 & 2 & 66  & 2 & 33  & 4 & 2 \\
17  & 17  & 1 & 2 & 2 & 66  & 3 & 22  & 6 & 2 \\
18  & 2 & 9 & 1 & 1 & 66  & 11  & 6 & 10  & 3 \\
19  & 19  & 1 & 2 & 2 & 66  & 66  & 1 & 4 & 2 \\
20  & 5 & 4 & 2 & 1 & 67  & 67  & 1 & 6 & 4 \\
21  & 3 & 7 & 2 & 2 & 68  & 17  & 4 & 8 & 3 \\
21  & 7 & 3 & 2 & 1 & 69  & 3 & 23  & 4 & 2 \\
22  & 2 & 11  & 1 & 1 & 69  & 23  & 3 & 8 & 5 \\
22  & 11  & 2 & 3 & 2 & 70  & 2 & 35  & 4 & 2 \\
23  & 23  & 1 & 3 & 3 & 70  & 5 & 14  & 8 & 3 \\
24  & 3 & 8 & 2 & 1 & 70  & 7 & 10  & 10  & 3 \\
24  & 8 & 3 & 2 & 2 & 70  & 70  & 1 & 2 & 1 \\
26  & 2 & 13  & 3 & 2 & 71  & 71  & 1 & 7 & 7 \\
26  & 13  & 2 & 3 & 2 & 72  & 8 & 9 & 4 & 2 \\
27  & 27  & 1 & 2 & 2 & 73  & 73  & 1 & 6 & 4 \\
28  & 7 & 4 & 3 & 2 & 74  & 2 & 37  & 5 & 3 \\
29  & 29  & 1 & 3 & 3 & 74  & 37  & 2 & 9 & 4 \\
30  & 2 & 15  & 2 & 1 & 75  & 3 & 25  & 6 & 3 \\
30  & 3 & 10  & 4 & 2 & 76  & 19  & 4 & 9 & 4 \\
30  & 5 & 6 & 4 & 2 & 77  & 7 & 11  & 6 & 4 \\
30  & 30  & 1 & 2 & 1 & 77  & 11  & 7 & 8 & 3 \\
31  & 31  & 1 & 3 & 3 & 78  & 2 & 39  & 6 & 2 \\
32  & 32  & 1 & 2 & 2 & 78  & 3 & 26  & 8 & 3 \\
33  & 3 & 11  & 2 & 1 & 78  & 13  & 6 & 12  & 4 \\
33  & 11  & 3 & 4 & 3 & 78  & 78  & 1 & 2 & 1 \\
34  & 2 & 17  & 2 & 2 & 79  & 79  & 1 & 7 & 6 \\
34  & 17  & 2 & 4 & 2 & 80  & 5 & 16  & 8 & 3 \\
35  & 5 & 7 & 4 & 3 & 82  & 2 & 41  & 4 & 3 \\
35  & 7 & 5 & 4 & 2 & 82  & 41  & 2 & 10  & 4 \\
37  & 37  & 1 & 3 & 2 & 83  & 83  & 1 & 8 & 7 \\
38  & 2 & 19  & 3 & 2 & 84  & 3 & 28  & 8 & 3 \\
38  & 19  & 2 & 5 & 3 & 84  & 7 & 12  & 12  & 2 \\
39  & 3 & 13  & 4 & 3 & 85  & 5 & 17  & 6 & 3 \\
39  & 13  & 3 & 4 & 2 & 85  & 17  & 5 & 8 & 3 \\
40  & 5 & 8 & 4 & 2 & 86  & 2 & 43  & 5 & 3 \\
40  & 8 & 5 & 2 & 1 & 86  & 43  & 2 & 11  & 5 \\
41  & 41  & 1 & 4 & 4 & 87  & 3 & 29  & 6 & 3 \\
42  & 2 & 21  & 4 & 2 & 87  & 29  & 3 & 10  & 6 \\
42  & 3 & 14  & 4 & 2 & 88  & 8 & 11  & 4 & 3 \\
42  & 7 & 6 & 6 & 2 & 88  & 11  & 8 & 10  & 3 \\
42  & 42  & 1 & 2 & 1 & 89  & 89  & 1 & 8 & 7 \\
43  & 43  & 1 & 4 & 3 & 90  & 2 & 45  & 6 & 2 \\
44  & 11  & 4 & 5 & 3 & 90  & 5 & 18  & 12  & 3 \\
45  & 5 & 9 & 4 & 2 & 91  & 7 & 13  & 8 & 3 \\
46  & 2 & 23  & 2 & 1 & 91  & 13  & 7 & 8 & 4 \\
46  & 23  & 2 & 6 & 4 & 92  & 23  & 4 & 11  & 6 \\
47  & 47  & 1 & 5 & 5 & 93  & 3 & 31  & 6 & 4 \\
48  & 3 & 16  & 4 & 2 & 93  & 31  & 3 & 10  & 3 \\
50  & 2 & 25  & 3 & 2 & 94  & 2 & 47  & 4 & 2 \\
51  & 3 & 17  & 4 & 2 & 94  & 47  & 2 & 12  & 7 \\
51  & 17  & 3 & 6 & 4 & 95  & 5 & 19  & 8 & 4 \\
52  & 13  & 4 & 6 & 2 & 95  & 19  & 5 & 10  & 6 \\
53  & 53  & 1 & 5 & 4 & 96  & 3 & 32  & 8 & 3 \\
54  & 2 & 27  & 3 & 2 & 96  & 32  & 3 & 6 & 4 \\
54  & 27  & 2 & 5 & 3 & 97  & 97  & 1 & 8 & 5 \\
55  & 5 & 11  & 4 & 2 & 98  & 2 & 49  & 6 & 3 \\
55  & 11  & 5 & 6 & 4 & 99  & 11  & 9 & 10  & 5 \\

\end{longtable}
\newpage

\begin{longtable}{lcccc}
\caption{$h_{p^{2r+1},N_2}$ and $T_{p^{2r+1},N_2}$}\label{tab:Boyd}\\
\toprule[2pt]
$p^{2r+1}N_2$ & $p^{2r+1}$ & $N_2$ & $h_{p^{2r+1},N_2}$ & $T_{p^{2r+1},N_2}$\\
\midrule
$5$ & $5$ & $1$ & $1$ & $1$\\
$15$ & $3$ & $5$ & $2$ & $1$\\
$27$ & $3^3$ & $1$ & $2$ & $2$\\
$35$ & $5$ & $7$ & $4$ & $3$\\
$125$ & $5^3$ & $1$ & $9$ & $7$\\
$135$ & $3^3$ & $5$ & $10$ & $4$\\
$189$ & $3^3$ & $7$ & $12$ & $6$\\
$243$ & $3^5$ & $1$ & $14$ & $10$\\
$250$ & $5^3$ & $2$ & $25$ & $9$\\
$343$ & $7^3$ & $1$ & $25$ & $16$\\
$405$ & $5$ & $3^4$ & $36$ & $11$\\
$750$ & $5^3$ & $6$ & $100$ & $18$\\
$972$ & $3^5$ & $4$ & $81$ & $25$\\
$1000$ & $5^3$ & $8$ & $100$ & $28$ \\
$1331$ & $11^3$ & $1$ & $102$ & $54$\\
$2187$ & $3^7$ & $1$ & $122$ & $70$\\
$3125$ & $5^5$ & $1$ & $209$ & $117$\\
$4116$ & $7^3$ & $12$ & $588$ & $77$\\
$16807$ & $7^5$ & $1$ & $1201$ & $625$\\
$35152$ & $13^3$ & $2^4$ & $4056$ & $1027$\\
$78125$ & $5^7$ & $1$ & $5209$ & $2667$\\
$322102$ & $11^5$ & $2$ & $36603$ & $9272$\\
$823543$ & $7^7$ & $1$ & $58825$ & $29584$\\
\bottomrule[2pt]
\end{longtable}
 \remark{We update $T_{3^3,5}, T_{5^3,8}, T_{3^7,1}$ and $T_{13^3,2^4}$ in Boyd's table of type numbers in~\cite[p.152]{B94}.} 
\newpage

\section{Computing local representation densities}\label{App:B}

This appendix computes the local representation densities needed for the type number formula. While some results can be derived from~\cite[Theorem 3.1]{Y98}, we provide direct calculations for completeness.

\subsection{Preliminaries}

We begin with fundamental lemmas on quadratic congruences.

\begin{lemma}[\cite{BJa12}, Lemma 3.2]\label{lemmafordensitiesatp1}
	Let $p$ be an odd prime with $p\nmid c$. For $t\geq1$,
	\begin{equation*}
		\card\{0\leq x< p^t: x^2\equiv c\pmod{p^t}\}=1+\left(\frac{c}{p}\right).
	\end{equation*}
\end{lemma}

\begin{lemma}\label{lemmafordensitiesatp2}
	Let $p$ be an odd prime with $p\nmid a$. Then
	\begin{equation*}
		\sum_{y=0}^{p-1}\left(\frac{a+y^2}{p}\right)=-1.
	\end{equation*}
\end{lemma}

\begin{lemma}[\cite{BJa12}, Lemma 2.1]\label{lemmafordensitiesat2}
	Let $c\equiv1\pmod{2}$ and $t\geq3$. Then 
	\begin{equation*}
		\card\{0\leq x< 2^t: x^2\equiv c\pmod{2^t}\}=2\left(1+\left(\frac{c}{2}\right)\right).
	\end{equation*}
\end{lemma}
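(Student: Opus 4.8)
The plan is to count the residues $x \bmod 2^t$ with $x^2 \equiv c \pmod{2^t}$ directly, using that $c$ odd forces every solution to be a unit, together with the explicit $2$-adic structure. The obstruction, compared with the odd-prime count in Lemma~\ref{lemmafordensitiesatp1}, is that Hensel's lemma is unavailable: the derivative $2x$ of $x^2-c$ vanishes modulo $2$, so a root modulo $2^j$ need not lift uniquely, or at all, to a root modulo $2^{j+1}$. I would therefore separate the argument into a rigidity statement describing the solution set once one solution is known, and a solvability criterion.

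\textbf{Step 1 (rigidity).} I claim that if $x_0^2 \equiv c \pmod{2^t}$ with $t \geq 3$, then the complete solution set modulo $2^t$ is exactly $\{x_0,\,-x_0,\,x_0+2^{t-1},\,-x_0+2^{t-1}\}$, and these four classes are pairwise distinct. Membership is clear since $(\pm x_0 + 2^{t-1})^2 = x_0^2 \pm 2^t x_0 + 2^{2t-2} \equiv x_0^2 \pmod{2^t}$ because $2t-2 \geq t$; distinctness uses that $x_0$ is odd, so none of $2x_0$, $2^{t-1}$, $2^{t-1}\pm 2x_0$ has $2$-adic valuation $\geq t$ when $t \geq 3$. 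For the converse, if $y^2 \equiv x_0^2 \pmod{2^t}$ then $2^t \mid (y-x_0)(y+x_0)$; since $y$ and $x_0$ are odd, $(y-x_0)+(y+x_0)=2y$ has $2$-adic valuation $1$, so $\min\bigl(v_2(y-x_0),v_2(y+x_0)\bigr)=1$, and hence one of $y\pm x_0$ is divisible by $2^{t-1}$, which forces $y$ into the listed set. Thus $\card(\{0\leq x\leq 2^t : x^2 \equiv c \pmod{2^t}\})$ equals $4$ if $c$ is a square modulo $2^t$ and $0$ otherwise.

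\textbf{Step 2 (solvability).} It remains to show that for $t \geq 3$ the congruence $x^2 \equiv c \pmod{2^t}$ is solvable precisely when $c \equiv 1 \pmod 8$. The cleanest route is the isomorphism $(\mathbb{Z}/2^t\mathbb{Z})^\times \cong \langle -1 \rangle \times \langle 5 \rangle \cong \mathbb{Z}/2\mathbb{Z} \times \mathbb{Z}/2^{t-2}\mathbb{Z}$: the subgroup of squares has index $4$, hence order $2^{t-3}$, and it is contained in the order-$2^{t-3}$ subgroup of units congruent to $1$ modulo $8$, so the two subgroups coincide. Alternatively one argues by induction on $t \geq 3$, the base case being that the odd squares modulo $8$ are exactly $\{1\}$, and the inductive step lifting a solution modulo $2^{t-1}$ to one modulo $2^t$ by adding a suitable multiple of $2^{t-2}$.

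\textbf{Conclusion.} Combining Steps 1 and 2, the count is $4$ when $c \equiv 1 \pmod 8$ and $0$ otherwise; one then matches this against the value $2\bigl(1 + (\tfrac{c}{2})\bigr)$ prescribed by the convention for the Kronecker symbol at $2$. The only genuinely delicate point is Step 2, the $2$-adic solvability criterion, which is exactly where the odd-prime argument breaks down; Step 1 and the final comparison are routine valuation bookkeeping.
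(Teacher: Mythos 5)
The paper contains no internal proof of this lemma: it is imported verbatim from \cite[Lemma 2.1]{BJa12}, so your argument is being measured against a citation rather than a proof, and the mathematical content of your Steps 1 and 2 is correct and complete. The factorization $2^t\mid (y-x_0)(y+x_0)$ together with $v_2\bigl((y-x_0)+(y+x_0)\bigr)=v_2(2y)=1$ does pin the solution set, once nonempty, to the four classes $\pm x_0,\ \pm x_0+2^{t-1}$, which are pairwise distinct for $t\geq 3$; and the decomposition $(\mathbb{Z}/2^t\mathbb{Z})^\times\cong\langle -1\rangle\times\langle 5\rangle$ correctly identifies the odd squares modulo $2^t$ with the units $\equiv 1\pmod 8$. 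So your count is $4$ if $c\equiv 1\pmod 8$ and $0$ otherwise, which is exactly the fact the paper actually uses in Propositions \ref{anisotropic2} and \ref{isotropic2} (there the nonzero counts always occur with target residue $\equiv 1\pmod 8$, and the remaining residues are handled by the count being $0$).

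The genuine problem is the step you dismiss as routine, the final ``matching.'' With the Kronecker symbol as this paper defines it, $\left(\frac{c}{2}\right)=1$ for $c\equiv\pm 1\pmod 8$, so for $c\equiv 7\pmod 8$ the stated right-hand side $2\left(1+\left(\frac{c}{2}\right)\right)$ equals $4$, whereas your (correct) count is $0$. Hence the displayed identity cannot be verified for all odd $c$: it holds for $c\equiv 1,3,5\pmod 8$ and fails for $c\equiv 7\pmod 8$, so the lemma as transcribed is really the statement for $c\equiv 1\pmod 4$ (or, for all odd $c$, the right-hand side should be $\left(1+\left(\frac{-1}{c}\right)\right)\left(1+\left(\frac{2}{c}\right)\right)$). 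Your write-up should say this explicitly—either restrict the range of $c$, which suffices for every later application in the paper, or correct the closed form—rather than assert a match that your own Steps 1 and 2 show to be false in one residue class.
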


\subsection{Anisotropic forms at odd primes}

\begin{prop}\label{prop:anisotropic_odd}
	Let $u\geq0$, $p$ an odd prime with $(\frac{\epsilon}{p})=-1$, and $p\nmid m$. For $n=p^lm$, write $l=2k$ (resp.\ $l=2k+1$) when $l$ is even (resp.\ odd). Then
	\begin{equation*}
		d_{-\epsilon x^2+p^{2u+1}y^2-\epsilon p^{2u+1}z^2,p}(n)=
		\begin{cases}
			0 & \text{if } l=2k+1<2u+1,\\
			p^{k}(1-(\frac{-m}{p})) & \text{if } l=2k\leq 2u,\\
			p^{2u-k}(1+p^{-1}) & \text{if } l=2k+1\geq 2u+1,\\
			p^{2u-k}(1-(\frac{-m}{p}))& \text{if } l=2k> 2u.
		\end{cases}
	\end{equation*}
\end{prop}

\begin{proof}
	By definition,
	\begin{equation*}
		d_{-\epsilon x^2+p^{2u+1}y^2-\epsilon p^{2u+1}z^2,p}(p^lm) = \frac{1}{p^{2t}}\card\{0\leq x,y,z<p^t:-\epsilon x^2+p^{2u+1}y^2-\epsilon p^{2u+1}z^2\equiv p^lm\pmod{p^t}\}.
	\end{equation*}
	
	\textit{Case 1: $l=2k+1<2u+1$.} It is nor hard to check the density vanishes.
	
	\textit{Case 2: $l=2k<2u+1$.} We have the recursion
	\begin{equation*}
		d_{-\epsilon x^2+p^{2u+1}y^2-\epsilon p^{2u+1}z^2,p}(p^lm) = pd_{-\epsilon x^2+p^{2u-1}y^2-\epsilon p^{2u-1}z^2,p}(p^{l-2}m).
	\end{equation*}
	Iterating $k$ times and using Lemma~\ref{lemmafordensitiesatp1},
	\begin{align*}
		d_{-\epsilon x^2+p^{2u+1}y^2-\epsilon p^{2u+1}z^2,p}(p^{2k}m) 
		&= p^{k}d_{-\epsilon x^2+p^{2u-2k+1}y^2-\epsilon p^{2u-2k+1}z^2,p}(m) \\
		&= \frac{p^k}{p^{2t}}\sum_{y,z=0}^{p^t-1}\left(1+\left(\frac{-\epsilon m+p^{2u-2k+1}(y^2-z^2)}{p}\right)\right)\\
		&= p^{k}\left(1-\left(\frac{-m}{p}\right)\right).
	\end{align*}
	
	\textit{Case 3: $l\geq 2u+1$.} We reduce to the principal form:
	\begin{equation*}
		d_{-\epsilon x^2+p^{2u+1}y^2-\epsilon p^{2u+1}z^2,p}(p^lm) = p^{u}d_{-\epsilon x^2+py^2-\epsilon pz^2,p}(p^{l-2u}m).
	\end{equation*}
	The principal form satisfies $d_{-\epsilon x^2+py^2-\epsilon pz^2,p}(p^lm) = p^{-1}d_{-\epsilon x^2+py^2-\epsilon pz^2,p}(p^{l-2}m)$.
	
	For $l=2k+1\geq2u+1$, iterating gives
	\begin{equation*}
		d_{-\epsilon x^2+p^{2u+1}y^2-\epsilon p^{2u+1}z^2,p}(p^{2k+1}m) = p^{2u-k}d_{-\epsilon x^2+py^2-\epsilon pz^2,p}(pm).
	\end{equation*}
	Computing the base case:
	\begin{align*}
		d_{-\epsilon x^2+py^2-\epsilon pz^2,p}(pm) 
		&= \frac{1}{p^{2t-2}}\sum_{x,z=0}^{p^{t-1}-1}\left(1+\left(\frac{m+\epsilon px^2+\epsilon z^2}{p}\right)\right)\\
		&= 1+\frac{1}{p}.
	\end{align*}
	
	For $l=2k>2u$, similarly,
	\begin{equation*}
		d_{-\epsilon x^2+p^{2u+1}y^2-\epsilon p^{2u+1}z^2,p}(p^{2k}m) =p^{2u-k}\left(1-\left(\frac{-m}{p}\right)\right).
	\end{equation*}
\end{proof}

\subsection{Isotropic forms at odd primes}

\begin{prop}\label{prop:isotropic_odd}
	Let $v_0\equiv0\pmod{2}$, $p\nmid m$, and $n=p^lm$ with $l=2k$ or $2k+1$. Then
	\begin{equation*}
		d_{-x^2-p^{v_0}yz,p}(n)=
		\begin{cases}
			0 & \text{if } l=2k+1<v_0,\\
			p^{k}(1+(\frac{-m}{p})) & \text{if } l=2k<v_0,\\
			p^{v_0/2-1}(p+1-p^{v_0-k-1}-p^{v_0-k-2}) & \text{if } l=2k+1>v_0,\\
			p^{v_0/2}(1+p^{-1})+(\frac{-m}{p})p^{v_0-k-1}-p^{v_0-k-1} & \text{if } l=2k\geq v_0.
		\end{cases}
	\end{equation*}
	Similarly, for $v_1\equiv1\pmod{2}$,
	\begin{equation*}
		d_{-x^2-p^{v_1}yz,p}(n)=
		\begin{cases}
			0 & \text{if } l=2k+1<v_1,\\
			p^{k}(1+(\frac{-m}{p})) & \text{if } l=2k<v_1,\\
			p^{(v_1-1)/2}(2p-p^{v_1-k-1}-p^{v_1-k-2}) & \text{if } l=2k+1\geq v_1,\\
			p^{(v_1-1)/2}(2p-p^{v_1-k}+(\frac{-m}{p})p^{v_1-k})& \text{if } l=2k>v_1.
		\end{cases}
	\end{equation*}
\end{prop}

\begin{proof}
	The proof parallels Proposition~\ref{prop:anisotropic_odd}. For $l<v_0$, the recursion
	\begin{equation*}
		d_{-x^2-p^{v_0}yz,p}(p^lm) = pd_{-x^2-p^{v_0-2}yz,p}(p^{l-2}m)
	\end{equation*}
	reduces to the base case $l=0$ or $l=2k+1<v_0$ (which vanishes).
	
	For $l\geq v_0$, we have
	\begin{equation*}
		d_{-x^2-p^{v_0}yz,p}(p^lm) = p^{v_0/2}d_{-x^2-yz,p}(p^{l-v_0}m).
	\end{equation*}
	The key observation is that $-x^2-yz\sim_p x^2+y^2+z^2$ when $v_0=0$. For $v_0>0$, direct computation using Lemma~\ref{lemmafordensitiesatp1} yields the stated formulas. The case $v_1$ odd is analogous, noting that
	\begin{align*}
		d_{-x^2-pyz,p}(p^lm) &= 2d_{-x^2-yz,p}(p^{l-2}m)-\frac{1}{p}d_{-x^2-pyz,p}(p^{l-2}m).
	\end{align*}
\end{proof}

\subsection{Forms at $p=2$}

For $p=2$, we use known densities from~\cite{BJa12,B23}.

\begin{lemma}[\cite{BJa12,B23}]\label{lemma:base_densities_2}
	For $n=4^am$ with $4\nmid m$,
	\begin{equation*}
		d_{-x^2-yz,2}(n)=
		\begin{cases}
			3/2 & \text{if } m \equiv7\pmod{8},\\
			3/2-2^{-a-1} & \text{if } m\equiv3\pmod{8},\\
			3/2-3\cdot2^{-a-2} & \text{if } m\equiv1,2\pmod{4},
		\end{cases}
	\end{equation*}
	and similar formulas hold for $d_{-x^2-2yz,2}(n)$, $d_{-x^2-4yz,2}(n)$, and $d_{3x^2-2(y^2+z^2+yz),2}(n)$.
\end{lemma}

\begin{lemma}\label{lemma:scaling_2}
	For any $v\geq0$,
	\begin{equation*}
		d_{-x^2-2^{v+2}yz,2}(4n)=2d_{-x^2-2^{v}yz,2}(n).
	\end{equation*}
	Similarly, $d_{3x^2-2^{2u+3}(y^2+z^2+yz),2}(4n)=2d_{3x^2-2^{2u+1}(y^2+z^2+yz),2}(n)$.
\end{lemma}

\begin{proof}
	By changing variables $x\mapsto x/2$ when computing modulo $2^t$,
	\begin{align*}
		d_{-x^2-2^{v+2}yz,2}(4n) 
		&= \frac{1}{2^{2t}}\card\{0\leq x<2^{t-1}, 0\leq y,z<2^t:-x^2-2^{v}yz\equiv n\pmod{2^{t-2}}\}\\
		&= 2\cdot 4\cdot 4\cdot\frac{1}{2^{2t}}\card\{0\leq x,y,z<2^{t-2}: -x^2-2^{v}yz\equiv n\pmod{2^{t-2}}\}\\
		&= 2d_{-x^2-2^{v}yz,2}(n).
	\end{align*}
\end{proof}

\begin{prop}\label{prop:anisotropic_2}
Let $n=4^{k}m$ be a negative discriminant with $-m$ fundamental discriminant (i.e. $m\equiv0,3\pmod4$ and $-m/4$ is not a discriminant when $4\mid m$). Then for any $u\geq0$,
\begin{equation*}
d_{3x^2-2^{2u+3}(y^2+z^2+yz),2}(4^km)=
\begin{cases}
2^{k+1}\left(1-\left(\frac{-m}{2}\right)\right) & \text{if} \quad m\equiv3\pmod4,2k<2u,\\
0 & \text{if} \quad m_0\equiv1,2\pmod4,2k<2u,\\
2^{2u+1-k}\left(1-\left(\frac{-m}{2}\right)\right)  & \text{if} \quad m\equiv3\pmod4,2k\geq2u,\\
3\cdot2^{2u-k} & \text{if} \quad m_0\equiv1,2\pmod4,2k\geq2u.
\end{cases}
\end{equation*}
\end{prop}

\begin{prop}\label{prop:isotropic_2}
Let $n=4^{k}m$ be a negative discriminant with $-m$ fundamental discriminant. Then for $v_0\equiv0\pmod{2}$,  
\begin{equation*}
d_{-x^2-2^{v_0+2}yz,2}(4^km)=
\begin{cases}
2^{k+1}\left(1+\left(\frac{-m}{2}\right)\right) & \text{if} \quad m\equiv3\pmod4,2k<v_0,\\
0 & \text{if} \quad m_0\equiv1,2\pmod4,2k<v_0,\\
3\cdot2^{v_0/2} & \text{if} \quad m\equiv7\pmod8,2k\geq v_0,\\
3\cdot2^{v_0/2}-2^{v_0+1-k} & \text{if} \quad m\equiv3\pmod8,2k\geq v_0,\\
3\cdot(2^{v_0/2}-2^{v_0+1-k}) & \text{if} \quad m_0\equiv1,2\pmod4,2k\geq v_0.
\end{cases}
\end{equation*}
	For $v_1\equiv1\pmod{2}$, similar formulas hold.
\begin{equation*}
d_{-x^2-2^{v_1+2}yz,2}(4^km)=
\begin{cases}
2^{k+1}\left(1+\left(\frac{-m}{2}\right)\right) & \text{if} \quad m\equiv3\pmod4,2k<v_1+1,\\
0 & \text{if} \quad m_0\equiv1,2\pmod4,2k<v_1+1,\\
2^{(v_1+3)/2} & \text{if} \quad m\equiv7\pmod8,2k\geq v_1+1,\\
2^{(v_1+3)/2}-2^{v_1+1-k} & \text{if} \quad m\equiv3\pmod8,2k\geq v_1+1,\\
2^{(v_1+3)/2}-3\cdot2^{v_1+1-k} & \text{if} \quad m_0\equiv1,2\pmod4,2k\geq v_1+1.
\end{cases}
\end{equation*}
\end{prop}

\begin{proof}
	These follow by combining Lemmas~\ref{lemma:base_densities_2} and~\ref{lemma:scaling_2} with the recursion.
\end{proof}

\subsection{Densities at the transformed forms}

Finally, we establish the densities needed in Lemma~\ref{lemmaforxingshu2}.

\begin{prop}\label{propfortypenumberatp}
	Let $u\geq0$, $p$ odd with $(\frac{\epsilon}{p})=-1$, and $v\geq0$. Then
	\begin{equation*}
		d_{-\epsilon p^{2u+1}x^2+y^2-\epsilon z^2,p}(1)=1+\frac{1}{p},
		\quad
		d_{-p^{v}x^2-yz,p}(1)=1-\frac{1}{p}.
	\end{equation*}
\end{prop}

\begin{proof}
	By Lemma~\ref{lemmafordensitiesatp1},
	\begin{align*}
		d_{-\epsilon p^{2u+1}x^2+y^2-\epsilon z^2,p}(1) 
		&= \frac{1}{p^{2t}}\sum_{x,z=0}^{p^t-1}\left(1+\left(\frac{1+\epsilon p^{2u+1}x^2+\epsilon z^2}{p}\right)\right)= 1+\frac{1}{p}.
	\end{align*}
	The second identity follows similarly.
\end{proof}

\begin{prop}\label{propfortypenumberat2}
	For any $u,v\geq0$,
	\begin{equation*}
		d_{3\cdot2^{2u+1}x^2-(y^2+z^2+yz),2}(1)=\frac{3}{2},
		\quad
		d_{-2^{v}x^2-yz,2}(1)=\frac{1}{2}.
	\end{equation*}
\end{prop}

\begin{proof}
	For an odd $c$,
	\begin{equation*}
		\card\{0\leq y,z<2^t: y^2+z^2+yz\equiv c\pmod{2^t}\}=3\cdot2^{t-1},
		\quad
		\card\{0\leq y,z<2^t: yz\equiv c\pmod{2^t}\}=2^{t-1}.
	\end{equation*}
	Thus
	\begin{equation*}
		d_{3\cdot2^{2u+1}x^2-(y^2+z^2+yz),2}(1)=d_{3\cdot2x^2-(y^2+z^2+yz),2}(1)=d_{3x^2-2(y^2+z^2+yz),2}(2)=\frac{3}{2}.
	\end{equation*}
	The second identity is analogous.
\end{proof}

\bibliographystyle{plain}
\bibliography{TQF}

\end{document}